\theoremstyle{definition}
\newtheorem{definition}{Definition}[section]
\newtheorem{example}[definition]{Example}
\newtheorem{remark}[definition]{Remark}
\theoremstyle{plain}
\newtheorem{prop}[definition]{Proposition}
\newtheorem{lemma}[definition]{Lemma}
\newtheorem{coro}[definition]{Corollary}
\newtheorem{conjecture}[definition]{Conjecture}
\newtheorem{theorem}[definition]{Theorem}
\title[Characterization of quasi-homogeneous map-germs]{A characterization of quasi-homogeneity in terms of liftable vector fields}
\author{I. Breva Ribes, R. Oset Sinha}
\date{}
\address{Departament de Matem\`atiques,
Universitat de Val\`encia, Campus de Burjassot, 46100 Burjassot,
Spain}
\email{raul.oset@uv.es}
\email{ignacio.breva@uv.es}
\thanks{Work of both authors partially supported by Grant PID2021-124577NB-I00 funded by MCIN/AEI/ 10.13039/501100011033 and by ``ERDF A way of making Europe".
Work of I. Breva Ribes supported by grant UV-INV-PREDOC22-2187086, funded by Universitat de València}
\subjclass[2020]{Primary 58K40; Secondary 58K20, 32S05} \keywords{weighted-homogeneity, quasi-homogeneity, stable unfoldings, liftable vector fields}
\begin{document}
\begin{abstract}
We prove under certain conditions that any stable unfolding of a quasi-homogeneous map-germ with finite singularity type is substantial.
We then prove that if an equidimensional map-germ is finitely determined, of corank 1, and either it admits a minimal stable unfolding or it is of multipliticy 3, then it admits a substantial unfolding if and only if it is quasi-homogeneous. 
Based on this we pose the following conjecture: a finitely determined map-germ is quasi-homogeneous if and only if it admits a substantial unfolding.
\end{abstract}

\maketitle

\section{Introduction}

Quasi-homogeneity has played an important role in the study of singularities for a long time.
For instance, many authors have given formulas to compute certain numerical topological invariants in terms of the weights and degrees of singular quasi-homogeneous map-germs (see \cite{greuel_formula,ohmoto_formula,pallarespenafort_formula} amongst others), making quasi-homogeneity a very desirable property.

A germ of a function $g\colon(\bbc^n,0)\to(\bbc,0)$ is weighted-homogeneous if there exist some weights $w_1,\ldots,w_n\in\bbn$ and degree $d\in\bbn$ such that for each $\lambda\in\bbc$
$$g(\lambda^{w_1}x_1,\ldots,\lambda^{w_n}x_n) = \lambda^d g(x_1,\ldots,x_n).$$
It is quasi-homogeneous if it is weighted-homogeneous after some coordinate change. These functions are crucial in the study of the relation between the Milnor and Tjurina numbers
$$\mu(g) = \dim_\bbc\frac{\ofu_n}{Jg}, \; \tau(g) = \dim_\bbc\frac{\ofu_n}{Jg + \langle g\rangle}$$
where $Jg$ is the ideal generated by the partial derivatives of $g$. 
Both of these are invariants of $g$: the first one is topological and measures the number of $n$-spheres in the homotopy type of the Milnor fibre of $g$, and the second one is analytic and measures the minimal number of parameters needed for a versal unfolding of $g$.

When $g$ has isolated singularity both invariants are finite and from their algebraic description it is immediate that $\mu(g) \geq \tau(g)$.
In particular, when $g$ is weighted-homogeneous the Euler relation is satisfied:
$$ w_1x_1\dpar{g}{x_1}(x) + \cdots +  w_nx_n\dpar{g}{x_n}(x) = d\cdot g(x).$$
Hence, quasi-homogeneous functions satisfy $g \in Jg$ and $\mu(g) = \tau(g)$.

In 1971 Saito proved the converse implication (\cite{saito}) and so if $g$ has isolated singularity, then $g$ is quasi-homogeneous if and only if $\mu(g) = \tau(g)$.
For a great compilation of some of Saito's techniques we refer to \cite{bkr_bruce}; here the authors study pairs $(f,X)$ of germs of a function and a variety in $\bbc^n$ and give sufficient conditions to determine when the equality of two invariants associated to the pair, called the relative Milnor and Tjurina numbers, characterizes the fact that a pair $(f,X)$ of germs of a function and variety in $\bbc^n$ share a common coordinate system in which both are weighted-homogeneous with respect to the same weights.
%



There are analogous definitions of the Milnor and Tjurina numbers of an isolated complete intersection singularity (ICIS) but in this case their algebraic descriptions are more complicated than in the hypersurface setting, making the problem of determining wether $\mu(X,0)\geq \tau(X,0)$ much harder.
Greuel proved in \cite{greuel_mutau} that when $(X,0)$ is quasi-homogeneous the equality $\mu(X,0) = \tau(X,0)$ is satisfied.
The opposite implication came much later: it is due to Vosegaard and can be found in \cite{vosegaard_mutau}.
The general inequality was completed in the meantime, but it required several steps.
First, in the mentioned article, Greuel also proved that the inequality holds for ICIS of dimension 1 and other cases.
Looijenga then proved it for the case that $X$ is of dimension 2 in \cite{looijenga_mutau2} (as cited in \cite{looijengasteenbrink}).
Finally, Looijenga and Steenbrink proved the general case in \cite{looijengasteenbrink}.

\medskip

The equivalent problem for the case of map-germs is still open.
A map-germ $f\colon(\bbc^n,0)\to(\bbc^p,0)$ is weighted-homogeneous if all of its components are weighted-homogeneous with respect to the same weights.
The role of the Tjurina number is here played by the $\A_e$-codimension, denoted by $\aecod (f)$.
The Milnor number is generalized by the discriminant Milnor number, $\mu_\Delta(f)$, when $n\geq p$ and by the image Milnor number, $\mu_I(f)$, when $n < p$, which are defined by taking a stabilization and looking at the homotopy type of the discriminant in the first case and that of the image in the second.
It was proven by Damon and Mond in \cite{damonmond_mudisc} that when $n\geq p$
$$\mu_\Delta(f) \geq \aecod (f)$$
with equality if $f$ is weighted-homogeneous in some coordinate system.
The opposite implication for the equality is unknown.
In the case that $p=n+1$ and $(n,n+1)$ is in the nice dimensions (i.e $n < 15$, see section 5.2 in \cite{nunomond}), the Mond conjecture states that the corresponding inequality
$$\mu_I(f)\geq \aecod(f)$$
is also satisfied, with equality if $f$ is quasi-homogeneous.
The opposite implication for the inequality is also unknown, although the conjecture is usually stated with just one implication (see section 2.6.2 in \cite{handbookIII}).
The conjecture has been solved when $n=1,2$ (see \cite{mondbentwires,vanstraten,mondconj}) but is still open in general.
The main obstruction is the lack of an explicit algebraic description of the image Milnor number, although some candidates have been proposed for this, see \cite{bobadillanunopenafort}.

\medskip


\medskip

In this paper, we give a necessary condition for any $\K$-finite (and, in particular, $\A$-finite) map-germ with stable unfolding in the nice dimensions to be weighted-homogeneous after an analytic coordinate change in source and target.
Moreover, we see that this condition characterizes quasi-homogeneity in the case that $n=p$ and $f$ is of corank 1 with minimal stable unfolding or if it has multiplicity $3$.

This condition is given in terms of the stable unfolding of $f$, and is a generalization of the concept of substantial 1-parameter stable unfoldings.
Essentially, if $F\colon(\bbc^n\times\bbc,0)\to(\bbc^p\times\bbc,0)$ is a 1-parameter stable unfolding of $f$, using $(X,\Lambda)$ for the coordinates in $\bbc^p\times\bbc$ we say that $F$ is {\it substantial} if $\Lambda\in d\Lambda(\Lift(F))$, where $\Lift(F)$ is the set of germs of vector fields in the target $\eta\in\theta_{p+1}$ such that there is a vector field in the source $\xi\in\theta_{n+1}$ satisfying $\eta\circ F = dF(\xi)$.
We say $\eta$ is liftable and $\xi$ lowerable for $F$.

Substantiality was originally defined in \cite{houstonaug2} as a technical condition that helps in computing the $\A_e$-codimension of certain map-germs called augmentations.
The study of these unfoldings led in \cite{phiequiv} to using the notion of $\lambda$-equivalent unfoldings, a relation which preserves substantiality and the $\A$-equivalence classes of augmentations of singularities by quasi-homogeneous functions.

Here we propose a generalization of this idea for stable unfoldings with more parameters, which specializes to the same definition in the $1$-parameter case.
After the examination of multiple examples and the results presented in this paper, we believe that this generalized property goes beyond its use in augmentation of singularities and propose the following conjecture:

\begin{conjecture}
Let $f\colon(\bbc^n,0)\to(\bbc^p,0)$ be an $\A$-finite map-germ such that its stable unfolding with minimal number of parameters lies in the nice dimensions.
Then, $f$ is quasi-homogeneous if and only if it admits a substantial unfolding.
\end{conjecture}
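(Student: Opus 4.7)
\emph{Strategy.} I would prove the two directions of the conjecture separately. The forward implication extends the first theorem announced in the abstract to arbitrary $\A$-finite germs whose minimal stable unfolding lies in the nice dimensions; the reverse implication extends the second theorem beyond the corank $1$ (minimal stable unfolding or multiplicity $3$) cases.

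\emph{Forward direction.} Assume $f$ is weighted-homogeneous with weights $(w_1,\ldots,w_n)$ in the source and degrees $(d_1,\ldots,d_p)$ in the target. I would build a minimal stable unfolding $F$ of $f$ in weighted-homogeneous form: because $f$ is quasi-homogeneous and $\A$-finite, the $\A_e$-normal space of $f$ admits a basis of weighted-homogeneous elements, and assigning a positive weight $e_k$ to each parameter $\Lambda_k$ equal to the degree of the associated normal space generator makes $F$ weighted-homogeneous with weights $(w_1,\ldots,w_n,e_1,\ldots,e_r)$ and degrees $(d_1,\ldots,d_p,e_1,\ldots,e_r)$. The source Euler field of $F$ is then $F$-related to the target Euler field
\[
\eta \;=\; \sum_{j=1}^p d_j X_j \frac{\partial}{\partial X_j} + \sum_{k=1}^r e_k \Lambda_k \frac{\partial}{\partial \Lambda_k},
\]
placing $\eta\in\Lift(F)$. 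Applying $d\Lambda_k$ to $\eta$ yields $e_k\Lambda_k$, and dividing by the positive integer $e_k$ gives $\Lambda_k\in d\Lambda_k(\Lift(F))$, which is the multi-parameter substantiality condition.

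\emph{Reverse direction.} Assume $F\colon(\bbc^{n+r},0)\to(\bbc^{p+r},0)$ is a minimal stable unfolding of $f$ and that $F$ is substantial, so for each $k=1,\ldots,r$ there exists $\eta^{(k)}\in\Lift(F)$ with $d\Lambda_k(\eta^{(k)})=\Lambda_k$, and a corresponding lowerable field $\xi^{(k)}\in\theta_{n+r}$. The $\Lambda$-linear part of $\eta^{(k)}$ has $\Lambda_k\,\frac{\partial}{\partial\Lambda_k}$ as a summand, hence a positive integer eigenvalue. The plan is to apply a Poincar\'e--Dulac-type normalization to linearize $\eta^{(k)}$ and $\xi^{(k)}$ simultaneously by analytic coordinate changes on source and target of $F$; after normalization, $\{\Lambda=0\}$ is invariant, and the restricted fields give source and target Euler fields for $f=F|_{\Lambda=0}$ with strictly positive integer weights, exhibiting $f$ as weighted-homogeneous in the new coordinates.

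\emph{Main obstacles.} The main technical hurdle is the linearization step in the reverse direction. Positivity in the $\Lambda$-direction is granted, but the $X$-components of the $\eta^{(k)}$ could have linear parts with vanishing or non-diagonalizable eigenvalues, or irremovable resonances with the $\Lambda_k$ weights. In the cases treated in the paper these pathologies are ruled out by the rigidity of stable corank $1$ normal forms (and by the explicit description of liftable fields in the multiplicity $3$ case); in the general nice-dimensions setting one would need to show that any such obstruction contradicts $\A$-finiteness of $f$ or finiteness of singularity type of $F$. A secondary obstacle in the multi-parameter case is compatibility: the $r$ normalizations must be performed simultaneously so that the normalized $\eta^{(k)}$ commute, integrating to a genuine $(\bbc^*)^r$-action on source and target of $F$ whose fixed slice recovers the weighted-homogeneity of $f$.
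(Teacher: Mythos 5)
The statement you are proving is posed in the paper as an open conjecture: the paper contains no proof of it, only partial results in its favour (the forward direction under a ``good weights'' hypothesis, and the reverse direction only for equidimensional corank~$1$ germs admitting a minimal stable unfolding or of multiplicity~$3$). So there is no paper proof to match your proposal against, and your proposal, as you yourself concede in the ``main obstacles'' paragraph, is not a proof either.

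Your forward direction is essentially sound and coincides with the paper's easy route (its Example of the Euler liftable field of a weighted-homogeneous unfolding, plus the remark that in the nice dimensions the weights are ``good''): one must check that the parameter weight, which is the difference $d_{j_k}-\deg(\bar f^k)$ between the degree of the target component receiving the normal-space monomial and the degree of that monomial (not the degree of the generator itself, as you write), is strictly positive; this is exactly what the nice-dimensions hypothesis supplies. Note the paper proves something stronger here, namely that every unfolding in a certain normal form is substantial, but for the conjecture existence of one substantial unfolding suffices, so your shortcut is fine. The genuine gap is the reverse direction. Declaring that the $\Lambda$-block of a substantial liftable field has nonzero (indeed unit) diagonal entries gives you nothing about the eigenvalues in the $X$-directions, and Poincar\'e--Dulac requires control of the full spectrum (positivity, or at least membership in the Poincar\'e domain) before you can hope for an analytic normalization producing Euler fields with positive integer weights; you would also need the identification $\Lift(F)=\Derlog(\Delta F)$, which itself carries dimension restrictions. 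In the cases the paper handles, this control comes from an explicit computation of the liftable and lowerable fields of the stable $A_k$ normal forms, showing that the entire eigenvalue vector is a scalar multiple of a fixed positive vector, so that one nonzero $\Lambda$-eigenvalue forces all eigenvalues to be positive; no analogue of this rigidity is known for general stable germs in the nice dimensions, and supplying it (or otherwise excluding zero eigenvalues, resonances, and non-semisimple linear parts, and making the $r$ normalizations compatible) is precisely the open content of the conjecture, not a technical detail to be deferred.
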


The core idea is that by looking at some subset of the eigenvalues of the matrix corresponding to the $1$-jet of a liftable vector field (in particular, looking at the eigenvalues of the projection over the parameter space), one can determine the rest of the eigenvalues of the liftable and lowerable vector fields, essentially showing if they can be converted into Euler vector fields in some coordinate system, in which case one can obtain a weighted-homogeneous normal form of the map-germ.

One of our main results supporting this conjecture is \cref{thm_wh_imp_subs}, which shows that every quasi-homogeneous map-germ must admit a whole family of substantial unfoldings (in fact, all of them are what we will call {\it weak substantial}).
As we will see, this is already useful to discard if a certain map-germ is quasi-homogeneous.
Notice that in order to use the Mond conjecture to determine if a map-germ is not weighted-homogeneous, one needs to compute both the image Milnor number and the $\A_e$-codimension and check if they differ.
In order to obtain $\A_e$-codimension, one of the most direct methods requires to compute the liftable vector fields of the stable unfolding (see \cite{damonakv}), so once this is done it is easier to check if the unfolding is substantial than to compute the image Milnor number.

The structure of this paper is as follows: in \cref{section_preliminaries} we introduce the general concept of substantiality, along with an auxiliar weak substantiality property, and the equivalence relations that preserve both of them, as well as all the definitions and previous results, and some minor results which are easily deduced from the definitions.
In \cref{section_necessary}, we prove that every weighted-homogeneous map under certain conditions admits a whole family of unfoldings which are substantial. This condition is satisfied in particular when $f$ admits a stable unfolding in the nice dimensions.
Then in \cref{section_converse} we show the converse for the case of corank 1, equidimensional map-germs with minimal stable unfolding or with multiplicity 3.
All of the sections contain multiple examples that illustrate both the definitions and the implications of the results.

\section{Preliminaries} \label{section_preliminaries}

We will work over $\bbk = \bbc, \bbr$ indistinctly unless otherwise specified. 
The ring of germs of smooth functions in $\bbk^s$ will be denoted by $\ofu_s$, $\mfr_s$ will be the maximal ideal given by functions that vanish at the origin, and the $\ofu_s$-module of germs of smooth vector fields in $\bbk^s$ will be denoted by $\theta_s$. 
If $\mononp f$ is a smooth map-germ, then $\theta(f)$ will be the set of vector fields along $f$, which can be identified with $\ofu_n^p$.
Recall that $f$ is of finite singularity type if it is $\K_e$-finite, i.e.  if the $\ofu_n$-module
$$\tke f = tf(\theta_n) + f^*\m_p\theta(f)$$
has finite $\bbk$-codimension in $\theta(f)$. 
Here $tf(\xi) = df(\xi)$ for all $\xi\in\theta_n$.
Similarly, $f$ is $\A$-finite if the module
$$\tae f = tf(\theta_n) + wf(\theta_p)$$
has finite $\bbk$-codimension in $\theta(f)$.
Here $wf(\eta) = \eta\circ f$ for all $\eta\in\theta_p$.
This codimension is denoted by $\aecod (f)$ and is related to the  following equivalence of map-germs: $\mononp{f,g}$ are $\A$-equivalent if there are germs of diffeomorphisms $\psi,\phi$ such that $\psi\circ f = g\circ \phi$.
A map-germ is {\it stable} if $\aecod(f) = 0$.

Notice that $wf(\theta_p)$ is an $\ofu_p$-module via $f$, but cannot be seen as an $\ofu_n$-module in any way, while $tf(\theta_n)$ is both an $\ofu_n$-module and an $\ofu_p$-module via $f$.
Hence, $\tae f$ is an $\ofu_p$-module via $f$. 

\begin{definition}\label{def_frelated}
Two vector fields $\eta\in\theta_p$ and $\xi\in\theta_n$ are {\it $f$-related} if the following equation is satisfied:
\begin{equation*}
\eta \circ f = df(\xi).
\end{equation*}
In this case it is said that $\eta$ is a {\it liftable} vector field of $f$, and that $\xi$ is a {\it lowerable} vector field of $f$.
Denote by $\Lift(f)$ the set of liftable vector fields of $f$, and by $\Low(f)$ the set of lowerable vector fields of $f$.
Both sets have a natural structure as $\ofu_p$-modules, the second one via $f$.

The {\it analytic stratum} of $f$ is defined as the $\bbk$-vector space $\tilde\tau(f) =\operatorname{ev}_0(\Lift(f))$, with $\operatorname{ev}_0$ being the evaluation at $0$ of vector fields in $\theta_p$.
A stable map-germ $f$ is said to be minimal if $\dim_\bbk\tilde\tau(f) = 0$
\end{definition}

The following lemma can be found as Lemma 6.1 in \cite{nishimuralifts}: 

\begin{lemma}\label{lem_nishimura_lifts}
Let $f,g\colon(\bbk^n,0)\to(\bbk^p,0)$ be smooth map-germs and assume there are some germs of diffeomorphism $\phi\colon(\bbk^n,0)\to(\bbk^n,0)$ and $\psi\colon(\bbk^p,0)\to(\bbk^p,0)$ such that $\psi\circ f\circ \phi = g$.
Then, the map
\begin{align*}
\Lift(f)&\to\Lift(g)\\
\eta &\mapsto d\psi \circ \eta \circ \psi^{-1}
\end{align*}
is a bijection. In particular, it is an isomorphism of $\ofu_p$-modules via $\psi^{-1}$.
\end{lemma}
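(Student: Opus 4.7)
The plan is a direct chain-rule verification that $\A$-equivalence transports the defining relation $\eta \circ f = df(\xi)$. Take $\eta \in \Lift(f)$ with lowerable partner $\xi \in \theta_n$, set $\tilde\eta := d\psi \circ \eta \circ \psi^{-1}$, and look for a vector field $\tilde\xi \in \theta_n$ with $\tilde\eta \circ g = dg(\tilde\xi)$. Differentiating $g = \psi \circ f \circ \phi$ gives $dg(x) = d\psi(f(\phi(x)))\, df(\phi(x))\, d\phi(x)$, and using $\psi^{-1}\circ g = f\circ \phi$ one computes
\[
\tilde\eta \circ g \;=\; d\psi \circ (\eta \circ f)\circ \phi \;=\; d\psi \circ df(\xi)\circ \phi \;=\; dg\bigl((d\phi)^{-1}(\xi\circ\phi)\bigr).
\]
So the candidate lowerable for $g$ is $\tilde\xi = (d\phi)^{-1}\cdot(\xi\circ\phi)$, and $\tilde\eta\in\Lift(g)$, proving the map is well defined.

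Next I would note that the construction is symmetric: from $f = \psi^{-1}\circ g \circ \phi^{-1}$ the analogous recipe yields a map $\Lift(g)\to \Lift(f)$, $\tilde\eta\mapsto d\psi^{-1}\circ\tilde\eta\circ\psi$. A one-line computation using $d\psi\circ d\psi^{-1} = \mathrm{id}$ shows that the two maps are mutually inverse, establishing the bijection.

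For the module statement, for $h\in\ofu_p$ and $\eta\in\Lift(f)$ the scalar factors out of the composition, giving
\[
d\psi\circ(h\eta)\circ\psi^{-1} \;=\; (h\circ\psi^{-1})\cdot\bigl(d\psi\circ\eta\circ\psi^{-1}\bigr),
\]
which is precisely $\ofu_p$-linearity after twisting the action on one side by $\psi^{-1}$, that is, the claimed isomorphism of $\ofu_p$-modules via $\psi^{-1}$.

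There is no serious obstacle here; the only point requiring care is bookkeeping of the evaluation points in the chain rule, since $\eta\in\theta_p$ must be composed with $f$ on one side and its translate $\psi^{-1}\circ g$ on the other. Everything else is formal.
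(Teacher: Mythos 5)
Your verification is correct: the chain-rule computation produces the lowerable partner $\tilde\xi = (d\phi)^{-1}\cdot(\xi\circ\phi)$, the symmetric construction gives the inverse map, and the semilinearity identity $d\psi\circ(h\eta)\circ\psi^{-1} = (h\circ\psi^{-1})\,(d\psi\circ\eta\circ\psi^{-1})$ is exactly what "isomorphism of $\ofu_p$-modules via $\psi^{-1}$" means. The paper itself does not reprove this lemma but cites it from the literature, and your argument is the same standard direct verification given there, with the evaluation points handled correctly.
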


\begin{definition}
If $H = (H_1,\ldots, H_m)\colon(\bbk^p,0)\to(\bbk^m,0)$ is a set of equations, the set of vector fields in $\theta_p$ which are tangent to $H^{-1}(0)$ is
$$\Derlog(H^{-1}(0)) = \left\lbrace \eta\in \theta(p) : \eta(\langle H_1,\ldots,H_m \rangle) \subseteq \langle  H_1,\ldots,H_m\rangle \right\rbrace$$
where $\eta$ acts over a function $\tilde H$  by $\eta(\tilde H) = \sum_{j=1}^p \eta_j(X) \dpar{\tilde H}{X_j}(X)$.
\end{definition}

Let $\Delta f$ be the discriminant of $f$, i.e., the image of the set of singular points of $f$ when $n \geq p$ or the image of $f$ when $n < p$.
For the following proposition, which only works for $\bbk = \bbc$, we refer to Proposition 8.8 and Remark 8.2 in \cite{nunomond}, and the remark after Definition 1 in \cite{nunoosetlifts}.

\begin{prop}\label{prop_lift_eq_derlog}
Let $f\colon(\bbc^n,0)\to(\bbc^p,0)$ smooth and assume either that $f$ is stable, or that it is $\A$-finite and $(n,p)$ do not satisfy $n>p\leq 2$.
Then $\Lift(f) = \Derlog(\Delta f)$.
In particular, this holds when $n=p$ and $f$ is $\A$-finite.
\end{prop}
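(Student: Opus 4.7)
The plan is to treat the two inclusions separately. The easy direction $\Lift(f)\subseteq\Derlog(\Delta f)$ follows from the defining relation $\eta\circ f = df(\xi)$, which when evaluated on any $h\in\ofu_p$ yields the identity $\eta(h)\circ f = \xi(h\circ f)$. In the range $n<p$, the discriminant is the image of $f$, so any $h$ vanishing on $\Delta f$ pulls back to zero; hence $\eta(h)\circ f = 0$ and $\eta(h)\in I(\Delta f)$. In the range $n\geq p$, the ideal $I(\Delta f)$ admits a Fitting-ideal description in terms of the pushforward $f_*(\ofu_n/Jf)$, a description compatible with the action of $f$-related vector fields in source and target, which gives the tangency of $\eta$ to $\Delta f$.

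For the nontrivial direction $\Derlog(\Delta f)\subseteq \Lift(f)$, I would split by the hypothesis. When $f$ is stable, I would invoke Mather's infinitesimal stability together with the Malgrange preparation theorem: starting from $\eta\in\Derlog(\Delta f)$, one constructs $\xi\in\theta_n$ inductively so that the residue $\eta\circ f - df(\xi)$ lies in successively higher powers of $f^*\m_p\theta(f)$, and then concludes via preparation (using that $\theta(f)$ is generated over $\ofu_p$ by finitely many elements modulo $tf(\theta_n)$) or by Artin approximation. Equivalently, this can be phrased as an instance of Damon's theorem identifying liftable vector fields with those arising from the $\K_{\Delta f}$-theory of the discriminant.

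For the $\A$-finite case with $(n,p)$ outside the exceptional range $n>p\leq 2$, the plan is to reduce to the stable case. Take a stable unfolding $F\colon(\bbc^n\times\bbc^d,0)\to(\bbc^p\times\bbc^d,0)$ of $f$; then $\Derlog(\Delta F) = \Lift(F)$ by the previous paragraph. Given $\eta\in\Derlog(\Delta f)$, extend it (with trivial component in the unfolding direction) to a vector field on the target of $F$ that is tangent to $\Delta F$ along the slice $\{u=0\}$, and exploit the fact that $\Delta F$ is a free divisor in the relevant dimensions to globalize this extension to some $\tilde\eta\in\Derlog(\Delta F) = \Lift(F)$; the lowerable partner of $\tilde\eta$ restricts along $\{u=0\}$ to the desired lift of $\eta$. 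The hard part throughout is the stable case: the inductive preparation argument is delicate, and the dimensional hypothesis in the $\A$-finite reduction is precisely what guarantees the good behavior of $\Delta F$ needed for the extension step, with the excluded range $n>p\leq 2$ being exactly where the freeness of $\Delta F$ (and hence the argument) can fail.
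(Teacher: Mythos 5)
The paper does not prove this proposition at all: it is quoted from the literature (Proposition 8.8 and Remark 8.2 of \cite{nunomond}, together with the remark after Definition 1 of \cite{nunoosetlifts}), so your sketch has to stand on its own, and in its crucial parts it does not. The inclusion $\Lift(f)\subseteq\Derlog(\Delta f)$ is fine, but the converse is where all the substance lies. In the stable case your inductive scheme never actually uses the hypothesis $\eta\in\Derlog(\Delta f)$: infinitesimal stability only gives a decomposition $\eta\circ f = df(\xi)+\eta'\circ f$ with no control on $\eta'$, and you offer no mechanism by which tangency to the discriminant pushes the residue into higher powers of $f^*\m_p\theta(f)$ --- yet some such mechanism is indispensable, since a vector field not tangent to $\Delta f$ is never liftable. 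Rephrasing the stable case ``as an instance of Damon's theorem'' is circular: the theorem of \cite{damonakv} is itself proved using the equality $\Lift(F)=\Derlog(\Delta F)$ for the stable germ, not the other way around.

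The reduction of the $\A$-finite case also fails as stated. The trivial extension $(\eta,0)$ is in general not tangent to $\Delta F$, ``tangent along the slice $\{u=0\}$'' is far weaker than membership in $\Derlog(\Delta F)$, and freeness of $\Delta F$ describes the module structure of $\Derlog(\Delta F)$ but gives no extension principle for a vector field prescribed on a slice. Even granting some $\tilde\eta\in\Lift(F)$, its lowerable partner only restricts to the slice if $\tilde\eta$ lies in the submodule $\mathcal M$ of \cref{prop_lifts_unfolding}, a condition you never arrange. Finally, your diagnosis of the excluded range is wrong: for $n>p$ the unfolding $F$ still has source dimension larger than target dimension and its discriminant is a free divisor in all dimensions, so freeness is not what breaks. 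The exclusion $n>p\leq 2$ is needed because there the discriminant is too small to detect the instability: for an $\A$-finite, non-quasi-homogeneous function germ $f\colon(\bbc^2,0)\to(\bbc,0)$ with isolated singularity one has $\Delta f=\{0\}$, so $X\dpar{}{X}\in\Derlog(\Delta f)$, while $X\dpar{}{X}$ is liftable if and only if $f\in Jf$, which fails by Saito's theorem \cite{saito}. A correct proof must use the dimension hypothesis precisely at the step where the lift, produced over the stable points, is extended across the unstable locus; your argument contains no such step.
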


%
%

\begin{definition}\label{def_weighted_homogeneous}
We say that the map-germ $\mononp{f = (f_1,\ldots,f_p)}$ is {\it weighted-homogeneous} if there exist $w_1,\ldots,w_n,d_1,\ldots,d_p$ positive integers such that 
$$f_j(\lambda^{w_1}x_1,\ldots,\lambda^{w_n}x_n) = \lambda^{d_j}f_j(x_1,\ldots,x_n)$$
for every $j= 1,\ldots,p$ and every $\lambda\in\bbk$.
In the case that $f$ is analytic this condition is equivalent to the fact that
$$d_j f_j(x) = \sum_{i=1}^n \dpar{f_j}{x_i}(x)x_iw_i$$
for every $j = 1,\ldots,p$.
We call $w_i$ the weight of the variable $x_i$ and $d_j$ the weighted degree of the component $f_j$.
Finally, we say that $f$ is quasi-homogeneous if it is $\A$-equivalent to a weighted-homogeneous map-germ.
\end{definition}

When $f$ is analytic this definition can be rewritten in terms of $f$-related vector fields: $f$ is weighted-homogeneous if and only if there exist Euler vector fields $\eta\in \Lift(f)$ and $\xi\in\Low(f)$ such that
\begin{align*}
\eta(X) &= \sum_{j=1}^p d_jX_j\dpar{}{X_j}\\
\xi(x) &= \sum_{i=1}^n w_ix_i\dpar{}{x_i}
\end{align*}
that are $f$-related.
Here $\dpar{}{X_j}$ and $\dpar{}{x_i}$ are the constant vector fields in $\theta_p$ and $\theta_n$.

Let $\hat \ofu_p$ be the formal completion of $\ofu_p$, which can be identified with the space of formal power series $\bbc[[x_1,\ldots,x_p]]$.
Given two vector fields $\eta,\eta'\in\theta_p$, denote its Lie bracket by $[\eta,\eta']$.

From now on when we refer to the {\it eigenvalues of a vector field}, we are referring to those of the matrix associated to the linear map determined by its 1-jet, $j^1\eta$.

\begin{theorem}\label{thm_poincare_dulac}
Let $\eta\in\theta_p$ be a germ of analytic vector field which vanishes at the origin, and let $d_1,\ldots,d_p\in\bbc$ be its eigenvalues.
Then:
\begin{enumerate}
\item There exists a formal diffeomorphism $\psi\in\hat\ofu_p^p$ such that $d\psi\circ \eta\circ\psi^{-1
}$ can be expressed as the sum $\eta_S + \eta_N$ of vector fields, with $\eta_S = \sum_{j=1}^p d_jX_j\dpar{}{X_j}$ and the linear part of $\eta_N$ being a nilpotent matrix, satisfying $[\eta_S,\eta_N] =0$.
This is called the Poincaré-Dulac normal form of $\eta$.
\item If $h\in\ofu_p$ satisifies $\eta(h) = \beta h$ for some $\beta\in\bbc$, then for $\bar h$ denoting $h$ in the coordinates given by the Poincaré-Dulac normal form:
\begin{align*}
\eta_S(\bar h) &= \beta \bar h\\
\eta_N(\bar h) &= 0.
\end{align*}
\item If $I$ is an ideal in $\ofu_p$ and $\eta(I)\subseteq I$, then for $\bar I$ denoting $I$ in the coordinates given by the Poincaré-Dulac normal form:
\begin{align*}
\eta_S(\bar I) &\subseteq I\\
\eta_N(\bar I) &\subseteq I.
\end{align*}
\end{enumerate}

Moreover, if $(d_1,\ldots,d_p)$ lies in the Poincaré domain (i.e., if $0$ does not belong to the convex hull in $\bbc$ of $(d_1,\ldots,d_p$)), then we can assume the diffeomorphism $\phi$ is analytic.
In particular, if $(d_1,\ldots,d_p)$ is a vector of positive integers, then it lies in the Poincaré domain.
\end{theorem}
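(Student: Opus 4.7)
The plan is to establish (1) by the classical Poincaré--Dulac normalization (a degree-by-degree removal of non-resonant terms via the homological equation), and then to deduce (2) and (3) from elementary Jordan decomposition on the finite-dimensional truncations $\ofu_p/\mfr_p^{N+1}$; the convergence statement in the Poincaré domain is the one genuinely analytic ingredient.

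For (1), I would first make a linear change of coordinates bringing the linear part of $\eta$ into Jordan form $A = S + N_0$ with $S = \mathrm{diag}(d_1,\ldots,d_p)$ and $N_0$ nilpotent commuting with $S$. Then, inductively on $k \geq 2$, a near-identity polynomial substitution $\mathrm{id} + h_k$ with $h_k$ a vector-valued polynomial of pure degree $k$ alters the degree-$k$ part of $\eta$ by exactly $-[\eta_S, h_k]$ while preserving every lower-degree term. Since $\mathrm{ad}(\eta_S)$ acts diagonally on degree-$k$ monomial vector fields with eigenvalue $\langle \alpha, d\rangle - d_j$ on $X^\alpha \dpar{}{X_j}$, its kernel is spanned by the resonant monomials and its image is complementary; choosing $h_k$ in this image solves the homological equation and kills every non-resonant term at that degree. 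Composing these corrections yields a formal diffeomorphism $\psi$ for which $d\psi \circ \eta \circ \psi^{-1} = \eta_S + \eta_N$, where $\eta_N$ is the sum of $N_0$ and resonant higher-order monomials. Since every resonant monomial lies in $\ker \mathrm{ad}(\eta_S)$ and $[\eta_S, N_0] = 0$, the identity $[\eta_S, \eta_N] = 0$ is automatic.

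For (2) and (3), I would pass to each finite-dimensional quotient $\ofu_p/\mfr_p^{N+1}$: because $\eta$ vanishes at the origin, it induces a linear endomorphism $\eta_{[N]}$ on that quotient whose Jordan decomposition coincides with the truncation of $\eta_S + \eta_N$, since $\eta_S$ acts semisimply (eigenspaces spanned by weight-homogeneous polynomials), $\eta_N$ acts nilpotently (higher-order resonant terms strictly raise polynomial degree while preserving weight, and $N_0$ is nilpotent), and the two commute. Standard linear algebra over $\bbc$ expresses the semisimple and nilpotent parts of any linear operator as polynomials in the operator without constant term, so both $\eta_S$ and $\eta_N$ preserve every $\eta$-invariant subspace; applied to the images of $I$ in each quotient and passed to the inverse limit, this gives $\eta_S(\bar I), \eta_N(\bar I) \subseteq \bar I$. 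For the eigenvector statement, $\eta(\bar h) = \beta \bar h$ places $\bar h$ in the honest (not merely generalized) $\beta$-eigenspace of $\eta_{[N]}$ at each truncation; on a true eigenvector the semisimple part acts by the eigenvalue and the nilpotent part by zero, giving $\eta_S(\bar h) = \beta \bar h$ and $\eta_N(\bar h) = \eta(\bar h) - \eta_S(\bar h) = 0$.

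Convergence of $\psi$ when $(d_1,\ldots,d_p)$ lies in the Poincaré domain is Poincaré's classical theorem: once $0 \notin \mathrm{conv}(d_1,\ldots,d_p)$ the small denominators $\langle \alpha, d\rangle - d_j$ admit a linear lower bound in $|\alpha|$ and only finitely many resonances occur, so the formal series defining $\psi$ can be dominated by a convergent majorant. When all $d_j$ are positive integers the convex hull lies in $(0,\infty)$, so the Poincaré condition is automatic. This majorant estimate is the main technical obstacle; the inductive formal normalization in (1) and the truncation-level linear algebra behind (2) and (3) are routine once the setup is in place.
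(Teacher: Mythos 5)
The paper offers no proof of \cref{thm_poincare_dulac} to compare against: it is quoted as the classical Poincaré--Dulac normal form theorem and used as background, so your write-up can only be judged as a proof of the classical statement. As such it follows the standard route and is essentially sound, but one step is stated incorrectly: a substitution $\mathrm{id}+h_k$ changes the degree-$k$ part of $\eta$ by $-[A,h_k]$, where $A=\eta_S+N_0$ is the \emph{full} linear part, not by $-[\eta_S,h_k]$. If you pick $h_k$ using only $\operatorname{ad}(\eta_S)$, the leftover term $-[N_0,h_k]$ is again non-resonant (since $\operatorname{ad}(N_0)$ commutes with $\operatorname{ad}(\eta_S)$ and preserves its eigenspaces), so non-resonant terms are not ``killed'' at that degree as claimed. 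The standard repair: solve the homological equation $[A,h_k]=$ (non-resonant part of $f_k$) directly, which is possible because the semisimple part of $\operatorname{ad}(A)$ on degree-$k$ vector fields is $\operatorname{ad}(\eta_S)$, hence $\operatorname{ad}(A)$ is invertible on the image of $\operatorname{ad}(\eta_S)$ and its generalized kernel is exactly the resonant subspace $\ker\operatorname{ad}(\eta_S)$; equivalently, iterate your choice using the nilpotency of $\operatorname{ad}(N_0)$.

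Two smaller points. In part (3), the truncation argument only gives $\eta_S(\bar I)\subseteq \bar I+\mfr_p^{N}$ for all $N$; to pass to $\eta_S(\bar I)\subseteq\bar I$ you need ideals to be closed in the $\mfr_p$-adic topology (Krull's intersection theorem), which should be said rather than absorbed into ``passed to the inverse limit.'' The rest is fine: $\eta_N$ preserves the degree filtration and acts nilpotently on each truncation, $\eta_S$ acts semisimply, and they commute, so the induced Jordan decomposition is indeed $(\eta_S,\eta_N)$ and both parts are constant-term-free polynomials in the induced operator; the genuine eigenvector argument for (2) and the convergence statement in the Poincaré domain (finitely many resonances, denominators bounded below linearly, majorant estimate) are the classical ingredients.
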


We also need the following result about preservation of eigenvalues after coordinate changes:

\begin{lemma}\label{lemma_eigenvalues_preserve}
Let $\eta\in\theta_p$ and $\psi\colon(\bbk^p,0)\to(\bbk^p,0)$ a germ of diffeomorphsim, then $d\psi\circ\eta\circ\psi^{-1}$ has the same eigenvalues as $\eta$.
\end{lemma}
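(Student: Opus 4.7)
The approach is to use the chain rule to show that the linear part of the transformed vector field $\tilde\eta:=d\psi\circ\eta\circ\psi^{-1}$ at the origin is conjugate, as a linear map, to the linear part of $\eta$ at the origin via the matrix $A:=d\psi(0)$.

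First, I would use $\psi(0)=0$ together with the inverse function theorem to write $\psi^{-1}(Y)=A^{-1}Y+O(|Y|^2)$. Combining this with the expansion $\eta(X)=\eta(0)+J\eta(0)\,X+O(|X|^2)$ yields
$$\eta(\psi^{-1}(Y))=\eta(0)+J\eta(0)\cdot A^{-1}Y+O(|Y|^2),$$
while by continuity of $d\psi$ we have $d\psi(\psi^{-1}(Y))=A+O(|Y|)$. Multiplying these two expansions gives
$$\tilde\eta(Y)=A\,\eta(0)+A\cdot J\eta(0)\cdot A^{-1}\,Y+O(|Y|^2),$$
so the matrix associated to $j^1\tilde\eta$ is $A\cdot J\eta(0)\cdot A^{-1}$. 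Since this is similar to the matrix of $j^1\eta$, both share the same characteristic polynomial and hence the same eigenvalues.

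I do not anticipate any significant obstacle: the argument reduces to the standard observation that linearizations of vector fields transform by conjugation under diffeomorphisms. In the context where the lemma is applied (the Poincaré--Dulac setting of \cref{thm_poincare_dulac}), the vector field vanishes at the origin, so the constant term $A\,\eta(0)$ drops out automatically; but as stated the lemma holds regardless, since by the convention introduced just before the theorem the eigenvalues refer only to the linear part of $j^1\eta$.
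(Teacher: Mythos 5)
Your conjugation computation is exactly the argument the paper has in mind (its proof simply says the statement is clear from the matrices of the $1$-jets of $\psi$, $\eta$ and $\psi^{-1}$), and it is correct precisely when $\eta(0)=0$. That hypothesis holds in every place the lemma is invoked: in the Poincar\'e--Dulac setting of \cref{thm_poincare_dulac} the vector field vanishes at the origin, and in \cref{prop_weak_subs_preserved} the field $d\pi(\eta)$ lies in $\m_\Lambda\theta(\pi)$, so it also vanishes at $0$.

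Your closing remark, however, is wrong on two counts. If $\eta(0)\neq 0$, your Taylor bookkeeping silently discards a genuinely linear term: in the product $d\psi(\psi^{-1}(Y))\cdot\eta(\psi^{-1}(Y))$ the factor $d\psi(\psi^{-1}(Y))=A+O(|Y|)$ multiplied against the constant $\eta(0)$ contributes a term of the form $D^2\psi(0)\bigl(A^{-1}Y,\eta(0)\bigr)$ to the linear part of $\tilde\eta$, which is of order $|Y|$, not $O(|Y|^2)$, so the $1$-jet is not $A\,J\eta(0)\,A^{-1}$ in general. Consequently the claim that ``as stated the lemma holds regardless'' is false under the paper's convention (eigenvalues of the linear part): take $p=1$, $\eta=\dpar{}{X}$ (linear part $0$) and $\psi(X)=X+X^2$; then $d\psi\circ\eta\circ\psi^{-1}(Y)=1+2\psi^{-1}(Y)=1+2Y+O(Y^2)$, whose linear part has eigenvalue $2\neq 0$. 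So the lemma, and your proof, should be read with the implicit hypothesis that $\eta$ vanishes at the origin; with that hypothesis your argument is complete and coincides with the paper's.
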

\begin{proof}
This is clear from looking at the matrices associated to the the $1$-jets of $\psi,\eta$ and $\psi^{-1}$.
\end{proof}

From here on $\mononp f$ will be an analytic map-germ of finite $\K_e$-codimension unless otherwise specified. 
We will use $x = (x_1,\ldots,x_n)$ to denote the variables in $\bbk^n$ and $X = (X_1,\ldots, X_p)$ for the variables in $\bbk^p$.
An $m$-parameter unfolding of $f$ is any map-germ $F\colon(\bbk^n\times\bbk^m,0)\to(\bbk^p\times\bbk^m,0)$ of the form $F(x,\lambda) = (f_\lambda(x),\lambda)$ such that $f_0 \equiv f$.
When $m=1$ and $F$ is stable, we say it is an OPSU (one-parameter stable unfolding).

We will use $(x,\lambda) = (x_1,\ldots,x_n,\lambda_1,\ldots,\lambda_{m})$ for the variables in $\bbk^n\times\bbk^m$, and $(X,\Lambda) = (X_1,\ldots,X_p,\Lambda_1,\ldots,\Lambda_m)$ for the variables in $\bbk^p\times\bbk^m$.
With this notation, $\m_\Lambda$ will be the ideal in $\ofu_{p+m}$ generated by the functions $\Lambda_1,\ldots,\Lambda_m$, and similarly with $\m_\lambda$ for the parameters in the source.

All map-germs of finite $\K_e$-codimension admit a stable unfolding, see \cite{nunomond}.
This motivates the following definition:

\begin{definition}\label{def_minimal_size_unfolding}
An $m$-parameter stable unfolding $F\colon (\bbk^n\times\bbk^m,0)\to(\bbk^p\times\bbk^m,0)$ of $f$ {\it has the minimal number of parameters} if $f$ admits no stable unfolding with less parameters.
\end{definition}

It is well-known that the minimal number of parameters that a map-germ requires to obtain a stable unfolding is given by the codimension as $\bbk$-vector space of the quotient of $\theta(f)$ by the module $\tke f + \tae f$.
As a reference, see \cite{notaopsus} or \cite{nunomond}.

\begin{definition}
If $F\colon (\bbk^n\times\bbk^m,0)\to(\bbk^p\times\bbk^m,0)$ is an unfolding of $f$, we say that a liftable vector field $\eta\in\Lift(F)$ is {\it projectable} if $d\pi(\eta) \in \m_\Lambda\theta(\pi)$, where $\pi\colon (\bbk^p\times\bbk^m,0)\to(\bbk^m,0)$ is the natural projection.
A similar definition follows for lowerable vector fields.
\end{definition}

If $\eta\in\Lift(F)$ and $\xi\in\Low(F)$ are $F$-related vector fields, then the equation $\eta\circ F = dF(\xi)$ implies that if $\eta$ is projectable, then $\xi$ is projectable, since the projection over the last $m$-components of the equation just gives $\xi_{p+k} = \eta_{p+k}\circ F$ for $k=1,\ldots,m$.

Moreover, if both $\eta$ and $\xi$ are projectable then the vector fields defined by $\tilde \eta(X) =\eta(X,0)$ and $\tilde \xi(x) = \xi(x,0)$, which can be seen as vector fields in $\theta_p$ and $\theta_n$ respectively and satisfy the equation $\tilde\eta \circ f = tf(\tilde \xi)$, are called their projections. 

\begin{example}\label{ex_unfolding_cusp}
The map-germ $f\colon(\bbk,0)\to(\bbk^2,0)$ given by $f(x) = (x^2,x^3)$ admits the 1-parameter stable unfolding $F(x,\lambda) = (x^2,x^3+\lambda x,\lambda)$.
The vector fields
\begin{align*}
\eta(X_1,X_2,\Lambda) &= 2X_1\dpar{}{X_1} + 3X_2\dpar{}{X_2}+ 2\Lambda\dpar{}{\Lambda}\\
\xi(x,\lambda) &= x\dpar{}{x}+2\lambda\dpar{}{\lambda}
\end{align*}
are $F$-related and projectable.
Their projections $\tilde\eta(X_1,X_2) = 2X_1\dpar{}{X_1} + 3X_2\dpar{}{X_2}$ and $\tilde\xi(x) = x\dpar{}{x}$ are $f$-related.
\end{example}

Projectable vector fields of a stable unfolding are in correspondence with the liftable and lowerable vector fields of the unfolded map-germ.
In fact, any liftable vector field of $\mononp f$ can be seen as the projection of a liftable vector field of its stable unfolding.
This was studied in \cite{nishimuralifts} and \cite{nunoosetlifts}, where the following result is obtained:

\begin{prop}\label{prop_lifts_unfolding}
Let $\mononp f$ be a non-stable germ of finite singularity type, and let $F$ be a stable $m$-parameter unfolding.
Then
$$\Lift(f) = \pi_1\left(i^*(\Lift(F)\cap \mathcal M)\right)$$
where $\mathcal M$ is the submodule of $\theta_{p+m}$ generated by the constants $\dpar{}{X_j}$ for $1\leq j \leq p$ and by the vector fields $\Lambda_k\dpar{}{\Lambda_j}$ for $1\leq k,j \leq m$, $\pi_1$ is the projection onto the first $p$ components and $i^*$ is the morphism induced by $i(X) = (X,0)$.
\end{prop}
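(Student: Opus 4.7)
The plan is to prove the two inclusions separately, with the non-trivial direction relying on stability of $F$ together with the identification of parameters $F^*\Lambda_k=\lambda_k$ forced by the unfolding structure. For the containment $\pi_1(i^*(\Lift(F)\cap\mathcal M))\subseteq\Lift(f)$, I would take $\eta\in\Lift(F)\cap\mathcal M$ and a lower $\xi$, and split the equation $\eta\circ F=dF(\xi)$ into the first $p$ and last $m$ components. Using $F(x,\lambda)=(f_\lambda(x),\lambda)$, the last block reads $\xi_{n+k}(x,\lambda)=\eta_{p+k}(f_\lambda(x),\lambda)$, and the hypothesis $\eta\in\mathcal M$ forces $\eta_{p+k}\in\m_\Lambda$, hence $\xi_{n+k}\in\m_\lambda$. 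Restricting to $\lambda=0$ and reading the first $p$ components then yields $\pi_1(i^*\eta)\circ f=df(\tilde\xi)$, where $\tilde\xi$ collects the first $n$ components of $\xi$ at $\lambda=0$, so $\pi_1(i^*\eta)\in\Lift(f)$.

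For the reverse inclusion, given $\tilde\eta\in\Lift(f)$ with lowerable $\tilde\xi$, the natural candidate is the trivial extension $\hat\eta(X,\Lambda)=\sum_j\tilde\eta_j(X)\dpar{}{X_j}$, which lies in $\mathcal M$ and automatically satisfies $\pi_1(i^*\hat\eta)=\tilde\eta$, but typically fails to be liftable by $F$. I would measure the failure through the error $R=\hat\eta\circ F-dF(\hat\xi)\in\theta(F)$, where $\hat\xi$ is the analogous trivial extension of $\tilde\xi$. The last $m$ components of $R$ vanish identically and the first $p$ components vanish at $\lambda=0$ by the $f$-relatedness of $\tilde\eta$ and $\tilde\xi$, so $R\in\m_\lambda\theta(F)$.

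The crux of the argument is to rewrite $R$ as $tF(\xi_1)+wF(\eta_1)$ with $\eta_1\in\m_\Lambda\theta_{p+m}\subseteq\mathcal M$. My plan is to pull out the factor $\lambda_k=F^*\Lambda_k$ to express $R=\sum_k\lambda_k\rho_k$ with $\rho_k\in\theta(F)$, decompose each $\rho_k=tF(\alpha_k)+wF(\beta_k)$ using the stability identity $\theta(F)=tF(\theta_{n+m})+wF(\theta_{p+m})$, and then push the factor $\Lambda_k$ back through the decomposition using $\ofu_{n+m}$-linearity of $tF$ together with the identity $F^*(\Lambda_k)\cdot wF(\beta)=wF(\Lambda_k\beta)$. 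Setting $\eta=\hat\eta-\eta_1$ and $\xi=\hat\xi+\xi_1$ then produces $\eta\in\Lift(F)\cap\mathcal M$ satisfying $\pi_1(i^*\eta)=\tilde\eta$, since $\eta_1\in\m_\Lambda\theta_{p+m}$ kills $i^*\eta_1$. The hardest part is precisely this passage from $R\in\m_\lambda\theta(F)$ to a decomposition whose $wF$-piece lies in $\m_\Lambda\theta_{p+m}$; it succeeds only because $F$ identifies source and target parameters under pullback, giving $F^*\m_\Lambda=\m_\lambda$ as ideals in $\ofu_{n+m}$.
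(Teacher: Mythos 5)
Your argument is correct and is essentially the standard one: the easy inclusion by projecting and setting $\lambda=0$, and the reverse inclusion by trivially extending $\tilde\eta,\tilde\xi$, observing that the error term lies in $\m_\lambda\theta(F)$, and using infinitesimal stability of $F$ together with $F^*\Lambda_k=\lambda_k$ to absorb the error into $tF(\m_\lambda\theta_{n+m})+wF(\m_\Lambda\theta_{p+m})$, so that the corrected field stays in $\mathcal M$ and restricts to $\tilde\eta$. This is the same mechanism used in the cited sources and replicated in the paper's proof of \cref{thm_wh_imp_subs}, so no comparison beyond that is needed.
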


\begin{remark}\label{rem_eigenvalues_projection_subset}
Notice that if $\eta\colon(\bbk^p\times\bbk^m,0)\to(\bbk^p\times\bbk^m,0)$ is a projectable, liftable vector field of a stable unfolding $F$ which projects to some $\eta_0$, then the eigenvalues of $\eta_0$ are a subset of the eigenvalues of $\eta$, since the $1$-jet of $\eta$ is a block-triangular matrix of the form
\begin{equation*}
\left(\begin{matrix}
A & B \\ 0 & C
\end{matrix}\right)
\end{equation*}
and the $1$-jet of $\eta_0$ is just the $p\times p$ submatrix $A$.
A similar thing happens with lowerable, projectable vector fields.
\end{remark}

\begin{definition}\label{def_substantial}
We say that the stable unfolding $F\colon(\bbk^n\times\bbk^m,0)\to(\bbk^p\times\bbk^m,0)$ of $\mononp f$ is {\it substantial} if there exists $\eta\in\Lift(F)$ such that for every $k=1,\ldots,m$
$$d\Lambda_k(\eta) = \Lambda_k u_k(X,\Lambda) + q_k(X,\Lambda)$$
where $q_k\in\mfr_\Lambda\mfr_{p+m}$ and $u_k\in\ofu_{p+m}$ is a unit.
Any such vector field $\eta$ will be called a {\it substantial vector field}.
\end{definition}

\begin{example}\label{ex_wh_one_substantial}
If $\mononp f$ admits a weighted-homogeneous unfolding, $F\colon(\bbk^n\times\bbk^m,0)\to(\bbk^p\times\bbk^m,0)$, then $F$ admits an Euler, liftable vector field
$$\eta(X,\Lambda) = d_1X_1\dpar{}{X_1}+\cdots+d_pX_p\dpar{}{X_p}+d_{p+1}\Lambda_1\dpar{}{\Lambda_1}+\cdots+d_{p+m}\Lambda_m\dpar{}{\Lambda_m}$$
which satisfies that $d\Lambda_k(\eta) = d_{p+k}\Lambda_k$ for each $1\leq k \leq m$, hence $F$ is substantial.
This was the case in \cref{ex_unfolding_cusp}.

If $F$ is weighted-homogeneous then, necessarily, $f$ is weighted-homogeneous.
Under some conditions for $(n,p)$ and the corank of $f$, when $f$ is weighted-homogeneous it always admits at least one weighted-homogeneous unfolding, hence substantial, see \cref{rem_good_weights_conditions}.
\end{example}

Notice in particular that any substantial vector field is projectable.
If $\eta$ is substantial then, dividing by the corresponding unit, for every $k=1,\ldots,m$ we can obtain projectable vector fields $\eta^k$ such that $d\Lambda_k(\eta^k) = \Lambda_k + \tilde q_k(X,\Lambda)$ with $\tilde  q_k(X,\Lambda)\in\mfr_\Lambda\mfr_{p+m}$.

Moreover, if  $\eta\in\Lift(F)$ is a projectable vector field and $\pi$ is the projection over the last $m$ coordinates, then the $1$-jet of $d\pi(\eta)$ only depends on $\Lambda_1,\ldots,\Lambda_m$, so it makes sense to consider the eigenvalues of $j^1d\pi(\eta)$ as an $m\times m$ matrix.
The following definition therefore makes sense:

\begin{definition}\label{def_weakly_subs}
We say that $F$ is {\it weakly substantial} if it admits a projectable $\eta\in\Lift(F)$ such that the $1$-jet of $d\pi(\eta)$ only has non-zero eigenvalues.
Such an $\eta$ is called a {\it weakly substantial vector field}.
\end{definition}

\begin{remark}\label{rem_substantial_nonzero_ev}
A substantial unfolding is also weakly substantial since a substantial vector field $\eta$ is projectable and the $1$-jet of $d\pi(\eta)$ has a matrix of the form
\begin{equation*}
\left(
\begin{matrix}
u_1(0,0) & 0 & \cdots & 0\\
0 & u_2(0,0) & \cdots & 0\\
\vdots & \vdots & \ddots & \vdots\\
0 & 0 & \cdots & u_m(0,0)
\end{matrix}
\right)
\end{equation*}
for some units $u_1,\ldots,u_m\in\ofu_{p+m}$.
\end{remark}

\begin{remark}
For the case in which $F$ is an OPSU, both notions of being substantial and weakly substantial are equivalent.

In fact, in this case both of them are equivalent to the original notion given in \cite{houstonaug2}, in which Houston only works with OPSUs and defines $F$ to be substantial if and only if $\Lambda\in d\Lambda(\Lift(F))$.
This obviously implies that $F$ is substantial in the sense of \cref{def_substantial}.
If $F$ is weakly substantial, then there exists $\eta\in\Lift(F)$ a projectable vector field such that $d\pi(\eta) = \eta_{p+1}$ has non-zero eigenvalues, with $\pi\colon(\bbk^p\times\bbk,0)\to(\bbk,0)$ the natural projection.
Since in this case $\eta_{p+1}$ is a single function, this means that $d\Lambda(\eta) = \eta_{p+1}(X,\Lambda) = \Lambda u_1(X,\Lambda)$ with $u_1\in\ofu_{p+1}$ a unit.
Therefore, after dividing by this unit, we get that $\Lambda\in d\Lambda(\Lift(F))$.

%
%

\end{remark}

\begin{definition}
Let $F(x,\lambda) = (f_\lambda(x),\lambda)$ and $G(x,\lambda) = (g_\lambda(x),\lambda)$ be two $m$-parameter unfoldings of $f$ and $g$.
$F$ and $G$ are {\it $\lambda$-equivalent} if there exist diffeomorphisms $\Psi(X,\Lambda) = (\psi_\Lambda(X),l(\Lambda))$ and $\Phi(x,\lambda) = (\phi_\lambda(x),l(\lambda))$ with $l$ a diffeomorphism such that
$$\Psi\circ F = G \circ \Phi.$$
If $\psi_0$, $\phi_0$ and $l$ are the identity mappings, then $F$ and $G$ are said to be equivalent as unfoldings.
\end{definition}

Notice in particular that if two unfoldings of $f$ and $g$ are $\lambda$-equivalent, then $f$ and $g$ are $\A$-equivalent, and if they are equivalent as unfoldings then $f=g$.

The notion of $\lambda$-equivalence is a particular instance of a more general equivalence relation, $\phi$-equivalence, which was first introduced in \cite{mendesfavaro1986} for the study of divergent diagrams and later used in \cite{manciniruas} to classify functions respecting some foliation.
In \cite{phiequiv}, this equivalence relation is used to study the simplicity of augmentations of singularities (see also \cite{augcod1morse}).



\begin{prop}
If $F$ and $F'$ are $m$-parameter stable unfoldings of $f$, and both are equivalent as unfoldings, if one of them is substantial then the other one is also substantial.
\end{prop}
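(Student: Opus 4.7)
The plan is to transport a substantial vector field of $F$ to a substantial vector field of $F'$ via the isomorphism of liftable-vector-field modules furnished by \cref{lem_nishimura_lifts}, and to exploit the fact that the diffeomorphisms witnessing equivalence as unfoldings preserve the $\Lambda$-coordinates.

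First I would unpack what equivalence as unfoldings gives us: diffeomorphisms $\Psi$ and $\Phi$ with $\Psi \circ F = F' \circ \Phi$, $l = \mathrm{id}$, and in particular $\Psi(X,\Lambda) = (\psi_\Lambda(X), \Lambda)$ with $\psi_0 = \mathrm{id}$. Applying \cref{lem_nishimura_lifts} to $\Psi$ and $\Phi$, given any substantial $\eta \in \Lift(F)$ I obtain $\tilde\eta = d\Psi \circ \eta \circ \Psi^{-1} \in \Lift(F')$. The goal is to check that $\tilde\eta$ satisfies the substantiality condition from \cref{def_substantial}.

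The crucial structural observation is that because $\Psi$ fixes the $\Lambda$-coordinates, the inverse $\Psi^{-1}$ has the form $(X,\Lambda) \mapsto (\psi_\Lambda^{-1}(X),\Lambda)$, and the Jacobian $d\Psi$ has its last $m$ rows equal to $(0\mid I_m)$. Consequently the last $m$ components of $\tilde\eta$ are simply $\eta_{p+k}\circ \Psi^{-1}$, so
\[
d\Lambda_k(\tilde\eta) = d\Lambda_k(\eta)\circ \Psi^{-1} = \Lambda_k\cdot (u_k\circ \Psi^{-1}) + (q_k\circ \Psi^{-1}),
\]
using $\Lambda_k \circ \Psi^{-1} = \Lambda_k$. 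I would then set $\tilde u_k = u_k\circ \Psi^{-1}$ and $\tilde q_k = q_k\circ \Psi^{-1}$. Since $\Psi^{-1}(0)=0$, the composition $u_k\circ \Psi^{-1}$ is still a unit, and for $\tilde q_k$ I would write $q_k = \sum_j \Lambda_j h_j$ with $h_j \in \mathfrak m_{p+m}$ (this is what $q_k \in \mathfrak m_\Lambda \mathfrak m_{p+m}$ means) so that $\tilde q_k = \sum_j \Lambda_j (h_j\circ \Psi^{-1}) \in \mathfrak m_\Lambda \mathfrak m_{p+m}$.

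The main obstacle, such as it is, is purely bookkeeping: making sure that the composition with $\Psi^{-1}$ respects the module $\mathfrak m_\Lambda \mathfrak m_{p+m}$. This reduces to the two facts that $\Psi^{-1}$ fixes each $\Lambda_k$ (so $\mathfrak m_\Lambda$ is preserved) and that $\Psi^{-1}(0)=0$ (so $\mathfrak m_{p+m}$ is preserved), both of which are immediate from the form of $\Psi$ dictated by equivalence as unfoldings. Note that the hypothesis $l = \mathrm{id}$ is what is genuinely used here; the same argument will in fact work whenever $\Psi$ has the form $(\psi_\Lambda(X),l(\Lambda))$ and $l$ preserves the ideal $\mathfrak m_\Lambda$, which is a remark worth flagging later when $\lambda$-equivalence is considered.
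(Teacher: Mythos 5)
Your argument is correct and is essentially the paper's own proof: both transport a substantial vector field through the isomorphism of \cref{lem_nishimura_lifts} and use that $\Psi$ fixes the $\Lambda$-coordinates, so that $d\Lambda_k$ of the transported field equals $\Lambda_k\,(u_k\circ\Psi^{-1})+q_k\circ\Psi^{-1}$, which is again of the form required in \cref{def_substantial} since $\Psi^{-1}(0)=0$ and $\Lambda_k\circ\Psi^{-1}=\Lambda_k$. One caution about your closing aside: for a non-identity $l$ (which automatically preserves $\mathfrak{m}_\Lambda$) the same computation gives $d\Lambda_k(\tilde\eta)=\sum_j \frac{\partial l_k}{\partial \Lambda_j}\,\bigl(\eta_{p+j}\circ\Psi^{-1}\bigr)$, which mixes the components (take $l$ a permutation of the $\Lambda_j$, say) and need not be of the form $\Lambda_k\cdot(\text{unit})+\mathfrak{m}_\Lambda\mathfrak{m}_{p+m}$ --- this is exactly why the paper only proves preservation of \emph{weak} substantiality under $\lambda$-equivalence, recovering full substantiality only in the OPSU case.
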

\begin{proof}
Assume that $F$ is substantial.
Let $\Psi\colon(\bbk^p\times\bbk^m)\to(\bbk^p\times\bbk^m)$ and $\Phi\colon(\bbk^n\times\bbk^m)\to(\bbk^n\times\bbk^m)$ be of the form $\Psi(X,\Lambda) = (\psi_\Lambda(X),\Lambda)$ and $\Phi(x,\lambda) = (\phi_\lambda(x),\lambda)$ with $\psi_0(X) = X$ and $\phi_0(x)= x$ such that $\Psi\circ F = F'\circ \Phi$.
We will use $(X', \Lambda)$ for the variables in the target of $F'$.

Since $F$ is substantial, we can pick $\eta \in \Lift(F)$ of the form $d\Lambda_i(\eta) = \Lambda_i u_i + q_i(X,\Lambda)$ with $u_i\in \ofu_{p+m}$ a unit and $q_i \in \m_\Lambda\m_{p+m}$, so that using the isomorphism from \cref{lem_nishimura_lifts} we have
$$d\Lambda_i(d\Psi(\eta)\circ \Psi^{-1}) = \Lambda_i u_i(\psi_\Lambda^{-1}(X'),\Lambda) + q_i(\psi_\Lambda^{-1}(X'),\Lambda).$$
Therefore $F'$ is substantial.
\end{proof}

\begin{remark}
Notice that the same result holds if $\Psi$ and $\Phi$ are unfoldings of any diffeomorphism other than the identity.
\end{remark}

\begin{prop}\label{prop_weak_subs_preserved}
If $F$ and $F'$ are $\lambda$-equivalent $m$-parameter stable unfoldings of $f$ and $f'$, and one of them is weakly substantial, the other one is also weakly substantial.
\end{prop}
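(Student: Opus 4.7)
The plan is to transport a weakly substantial vector field of $F$ to one of $F'$ via \cref{lem_nishimura_lifts}. Since $\Psi\circ F = F'\circ\Phi$ we have $F' = \Psi\circ F\circ\Phi^{-1}$, so if $\eta\in\Lift(F)$ is weakly substantial then $\eta' := d\Psi\circ\eta\circ\Psi^{-1}\in\Lift(F')$. It remains to verify the two defining properties of weak substantiality for $\eta'$: that it is projectable, and that the $1$-jet of $d\pi(\eta')$ has only non-zero eigenvalues, where $\pi\colon(\bbk^p\times\bbk^m,0)\to(\bbk^m,0)$ is the natural projection.

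Projectability will follow from the block-triangular form of $\lambda$-equivalences. Writing $\Psi^{-1}(X',\Lambda') = (\tilde\psi_{\Lambda'}(X'), l^{-1}(\Lambda'))$ and using that the last $m$ components of $\Psi$ depend only on $\Lambda$, a direct computation gives
$$(\eta')_{p+k}(X',\Lambda') = \sum_{s=1}^m \frac{\partial l_k}{\partial\Lambda_s}\bigl(l^{-1}(\Lambda')\bigr)\,\eta_{p+s}\bigl(\tilde\psi_{\Lambda'}(X'),\,l^{-1}(\Lambda')\bigr).$$
Since $\eta_{p+s}\in\m_\Lambda$ and $(\Psi^{-1})^*\Lambda_s = l^{-1}_s(\Lambda')\in\m_{\Lambda'}$, each summand lies in $\m_{\Lambda'}$, so $d\pi(\eta')\in\m_{\Lambda'}\theta(\pi)$.

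For the eigenvalue condition I would compute the lower-right $m\times m$ block of $j^1\eta'(0)$ from the same expression. Because $\eta_{p+s}\in\m_\Lambda$ has no $X$-linear terms, $\partial\eta_{p+s}/\partial X_i(0) = 0$; combined with $\eta_{p+s}(0)=0$, the chain rule applied to the formula above yields
$$\frac{\partial(\eta')_{p+k}}{\partial\Lambda'_j}(0) = (CRC^{-1})_{kj}, \qquad \frac{\partial(\eta')_{p+k}}{\partial X'_i}(0) = 0,$$
where $C=dl(0)$ and $R$ is the matrix $\bigl(\partial\eta_{p+s}/\partial\Lambda_j\bigr)(0)$ representing $j^1d\pi(\eta)(0)$. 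Hence the matrix associated to $j^1d\pi(\eta')$ equals $CRC^{-1}$, which is conjugate to $R$ and therefore has the same non-zero eigenvalues, as required.

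The main obstacle is purely bookkeeping in the chain-rule computation: one must check that the potentially contributing cross terms (those involving second derivatives of $l$ or first derivatives of $\tilde\psi$) all drop out at the origin. These cancellations rest, respectively, on $\eta_{p+s}(0)=0$ and on the absence of $X$-linear terms in $\eta_{p+s}$, both immediate consequences of projectability. No further hypothesis on $F$ or $F'$ is needed beyond the fact that the $\Lambda$-components of $\Psi$ depend solely on $\Lambda$, which is exactly the defining feature of a $\lambda$-equivalence.
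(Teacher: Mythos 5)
Your proposal is correct and follows essentially the same route as the paper: transport $\eta$ to $\eta'=d\Psi\circ\eta\circ\Psi^{-1}\in\Lift(F')$ via \cref{lem_nishimura_lifts}, use that the $\Lambda$-components of $\Psi$ depend only on $\Lambda$ to get $d\pi(\eta')=dl(d\pi(\eta))\circ\Psi^{-1}$, and conclude that $j^1d\pi(\eta')$ is conjugate (by $dl(0)$) to $j^1d\pi(\eta)$, hence has the same non-zero eigenvalues, which is exactly the paper's use of \cref{lemma_eigenvalues_preserve}. Your explicit coordinate check of projectability and of the vanishing cross terms merely spells out what the paper leaves implicit in the remark preceding \cref{def_weakly_subs}.
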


\begin{proof}
The argument is similar as in the proof of the last proposition, but we have that $\Psi(X,\Lambda) = (\psi_\Lambda(X),l(\Lambda))$ for some diffeomorphism $l\colon(\bbk^m,0)\to(\bbk^m,0)$.
Then, if $\pi$ is the projection over the last $m$ coordinates
$$d\pi\left( d\Psi(\eta)\circ\Psi^{-1}\right) = dl(d\pi(\eta))\circ \Psi^{-1}.$$
Since the $1$-jet of $d\pi(\eta)$ only depends on $\Lambda$ we have
$$j^1d\pi\left( d\Psi(\eta)\circ\Psi^{-1}\right) = dl(j^1(d\pi(\eta))\circ l^{-1})$$
therefore $d\pi\left(d\Psi(\eta)\circ\Psi^{-1}\right)$ has the same eigenvalues as $d\pi(\eta)$ by \cref{lemma_eigenvalues_preserve}.


\end{proof}

%

\begin{coro}\label{coro_subs_preserved_opsu}
If $F$ and $F'$ are $\lambda$-equivalent OPSUs and one of them is substantial, then the other one is also substantial.
\end{coro}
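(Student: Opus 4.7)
The plan is to leverage the two results immediately preceding the corollary: \cref{prop_weak_subs_preserved}, which guarantees that weak substantiality is preserved under $\lambda$-equivalence for any number of parameters, and the remark following \cref{def_weakly_subs}, which asserts that for OPSUs (the case $m=1$) the notions of substantial and weakly substantial coincide.

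Concretely, I would argue as follows. Assume without loss of generality that $F$ is substantial. Since $F$ is an OPSU, the remark following \cref{def_weakly_subs} tells us that $F$ is weakly substantial. As $F$ and $F'$ are $\lambda$-equivalent $1$-parameter stable unfoldings, \cref{prop_weak_subs_preserved} applies and yields that $F'$ is also weakly substantial. But $F'$ is likewise an OPSU, so by the same remark its weak substantiality is equivalent to substantiality, whence $F'$ is substantial.

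No step here is an obstacle: the corollary is a direct consequence of chaining together the equivalence of the two notions in the one-parameter setting with the preservation result for the weaker notion. The only thing worth being explicit about is that both hypotheses of \cref{prop_weak_subs_preserved} are met, namely that $F$ and $F'$ are $\lambda$-equivalent and that they are stable unfoldings with the same number of parameters, both of which are built into the definition of a pair of $\lambda$-equivalent OPSUs.
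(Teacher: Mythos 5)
Your proposal is correct and coincides with the paper's own argument: the paper likewise proves the corollary by combining \cref{prop_weak_subs_preserved} with the observation that for one-parameter stable unfoldings substantiality and weak substantiality are equivalent. Nothing is missing.
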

\begin{proof}
This is just \cref{prop_weak_subs_preserved} combined with the fact that for 1-parameter stable unfoldings both notions of substantiality coincide.
\end{proof}

\section{Necessary condition for weighted-homogeneity} \label{section_necessary}

Since $\mononp f$ is a $\K_e$-finite map-germ, we can find some vector fields $\bar f^1,\ldots,\bar f^m \in \theta(f)$ such that
\begin{equation}\label{eq_stable_generators}
\tke f + \linsp{\bbk}{\bar f^1(x),\ldots,\bar f^m(x)} + \linsp{\bbk}{\dpar{}{X_1},\ldots,\dpar{}{X_p}} = \theta(f)
\end{equation}
and $m$ being the minimal number necessary to satisfy this relation.
This is equivalent to saying that $\bar f^1,\ldots, \bar f^m$ form a basis as $\bbk$-vector space of the quotient of $\tke f + \tae f$ in $\theta(f)$.
Moreover, we can assume that $\bar{f}^k$ is a vector field whose components are all equal to zero except for one, which is a single monomial.
This is, for each $k= 1,\ldots,m$ there is some $j_k\in\lbrace 1,\ldots, p\rbrace$ such that we can write
\begin{equation*}
\bar{f}^k(x) = \sum_{j=1}^p \bar{f}^k_j(x) \dpar{}{X_j} = x^{\alpha_{j_k}}\dpar{}{X_{j_k}}
\end{equation*}
with $\bar f_j^k = 0$ for $j \neq j_k$ and $\bar f_{j_k}^k(x) = x^{\alpha_{j_k}}$ for some $\alpha_{j_k}\in  \bbn^n$.

\begin{example}\label{ex_h2}
Let $f\colon(\bbk^2,0)\to(\bbk^3,0)$ be given by $f(x,y) = (x,y^3,y^5+xy)$, then
\begin{equation*}
\tke f = \linsp{\ofu_2}{
\left(\begin{matrix}1\\0\\y\end{matrix}\right),
\left(\begin{matrix}0\\3y^2\\5y^4+x\end{matrix}\right)
} + \left\langle x, y^3\right\rangle \cdot \theta(f),
\end{equation*}
therefore
\begin{equation*}
\tke f + \linsp{\bbk}{\left(\begin{matrix}1\\0\\0\end{matrix}\right),\left(\begin{matrix}0\\1\\0\end{matrix}\right),\left(\begin{matrix}0\\0\\1\end{matrix}\right)}  +\linsp{\bbk}{\left(\begin{matrix}0\\y\\0\end{matrix}\right),\left(\begin{matrix}0\\0\\y^2\end{matrix}\right)} = \theta(f)
\end{equation*}
Here $\bar f^1(x,y) = y \dpar{}{X_2}, \bar f^2(x,y) = y^2\dpar{}{X_3}$
\end{example}

Although the results of this section will hold for any map-germ $f$ with finite $\K_e$-codimension, the particular case of map-germs with finite $\A_e$-codimension will be of special interest.
Keeping the notation above, we can prove that:

\begin{prop}\label{prop_normal_unfolding}
Every $m$-parameter stable unfolding of an $\A$-finite map-germ $f$ is $\lambda$-equivalent to one $F\colon(\bbk^n\times\bbk^m,0)\to(\bbk^p\times\bbk^m,0)$  of the form
\begin{equation}\label{eq_good_unfolding}
F(x,\lambda) = \left( f(x) + \sum_{k=1}^m \left(\lambda_k + \sum_{s=1}^{m}\lambda_s q_k^s(f(x),\lambda)\right)\bar f^k,\lambda\right)
\end{equation}
where $q_k^s(X,\Lambda)\in\ofu_{p+m}$ satisfies that $q_k^s(X,0)\in\mfr_p$. 
\end{prop}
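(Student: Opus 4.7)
The plan is to normalize $F$ by a sequence of $\lambda$-equivalences that first standardize the $\lambda$-linear part of the unfolding and then iteratively push higher-order $\lambda$-terms into the stated shape by a preparation argument.

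First I would write $F(x,\lambda)=(f(x)+G(x,\lambda),\lambda)$ with $G(x,0)=0$ and expand $G(x,\lambda)=\sum_{k=1}^m \lambda_k H^k(x,\lambda)$ by Hadamard's lemma. Restricting the infinitesimal-stability equation $\theta(F)=tF(\theta_{n+m})+wF(\theta_{p+m})$ to $\lambda=0$ and projecting onto the first $p$ components yields
\begin{equation*}
\theta(f) = tf(\theta_n) + wf(\theta_p) + \sum_{k=1}^m \ofu_n\cdot H^k(x,0),
\end{equation*}
and combined with the minimality of $m$ and Nakayama's lemma applied to the finite-dimensional quotient $\theta(f)/(\tke f+\tae f)$, this forces the classes $[H^k(x,0)]$ to form a basis of that quotient. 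A linear change of parameters $\lambda\mapsto A\lambda$ (a $\lambda$-equivalence with $l(\lambda)=A\lambda$) then brings me to $H^k(x,0)=\bar f^k+r^k$ with $r^k\in\tke f+\tae f$.

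Next I would absorb the ``trivial'' part of $r^k$ into source and target unfoldings of the identity. Decompose $r^k=tf(\xi^k)+\rho^k+\eta^k\circ f$ with $\xi^k\in\theta_n$, $\eta^k\in\theta_p$, and $\rho^k\in f^*\m_p\theta(f)$, and apply
\begin{equation*}
\Phi(x,\lambda)=\Bigl(x+\sum_k\lambda_k\xi^k(x),\lambda\Bigr)+O(\lambda^2), \qquad \Psi(X,\Lambda)=\Bigl(X-\sum_k\Lambda_k\eta^k(X),\Lambda\Bigr)+O(\Lambda^2),
\end{equation*}
both of which are $\lambda$-equivalences with $l=\mathrm{id}$. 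After this step the $\lambda$-linear part of $G$ is $\sum_k \lambda_k(\bar f^k+\rho^k(x))$ with $\rho^k\in f^*\m_p\theta(f)$. Since the $\{\bar f^s\}$ generate $\theta(f)/\tae f$ as an $\ofu_p$-module via $f^*$ (again by Nakayama applied to the mod-$\m_p$ reduction), writing $\rho^k=\sum_i h_i^k(f(x))v_i^k(x)$ with $h_i^k\in\m_p$ and expanding each $v_i^k$ in the $\bar f^s$'s modulo $\tae f$ produces $\rho^k=\sum_s q_k^s(f(x))\bar f^s+\sigma^k$ with $q_k^s\in\m_p$ and $\sigma^k\in\tae f$, and the residual $\sigma^k$ is absorbed by a further round of source/target changes of the same type.

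Finally I would iterate this procedure order by order in $\lambda$ to normalize the higher Taylor coefficients of $G$, producing coefficients $q_k^s(f(x),\lambda)$ that pick up $\lambda$-dependence while the condition $q_k^s(X,0)\in\mfr_p$ persists throughout. The main obstacle is to guarantee the convergence of this iteration to genuinely analytic (not merely formal) coefficients; I expect to resolve this by invoking Malgrange's preparation theorem at the unfolding level, applied to $\theta(F)$ as a finitely generated $\ofu_{p+m}$-module via $F^*$, with the infinitesimal stability of $F$ supplying the required surjectivity and preparation furnishing analytic representatives $q_k^s(X,\Lambda)$ for the coefficients, so that the limit of the iteration assembles into the stated analytic normal form.
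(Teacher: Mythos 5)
Your strategy (order-by-order normalization by unfoldings of the identity, plus a parameter change) differs from the paper's, and as written it has two genuine gaps. The first is in your very first deduction: restricting infinitesimal stability to $\lambda=0$ and projecting only yields $\theta(f)=tf(\theta_n)+wf(\theta_p)+\sum_{k}\ofu_n\cdot H^k(x,0)$, with $\ofu_n$-coefficients, and the quotient $\theta(f)/(\tke f+\tae f)$ is an $\ofu_p$-module via $f$ but not an $\ofu_n$-module, so no Nakayama argument reduces those coefficients to constants. Indeed the implication you assert is false: for $f(x)=x^5$ (so $m=3$, $\bar f^k=x^k$), the choice $H^1(x,0)=x$, $H^2(x,0)=H^3(x,0)=x^2$ satisfies your displayed identity, yet the classes $[H^k(x,0)]$ do not span the quotient and the corresponding unfolding is not stable. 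What you actually need is the sharper standard criterion that an unfolding $F$ is stable if and only if $\theta(f)=\tke f+\linsp{\bbk}{\dpar{}{X_1},\ldots,\dpar{}{X_p}}+\linsp{\bbk}{\dot F_1,\ldots,\dot F_m}$ (Sections 4.2--4.3 and Lemma 5.5 in \cite{nunomond}), whose proof uses the preparation theorem on $\theta(F)$ as an $\ofu_{p+m}$-module rather than a restriction to $\lambda=0$; this is exactly what the paper invokes, and citing it would repair this step. (Your subsequent observation that the $\bar f^s$ generate $\theta(f)/\tae f$ over $\ofu_p$ is correct and is essentially the fact from \cite{notaopsus} that the paper uses.)

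The second, more serious gap is the one you flag yourself: the iteration order by order in $\lambda$ produces at best a formal normal form, and ``invoking Malgrange preparation at the unfolding level'' is not a mechanism for making an infinite composition of coordinate changes converge to an analytic equivalence of unfoldings; preparation gives finite generation of modules, not convergence, so the main analytic difficulty is left unresolved. The paper avoids any iteration: using the basis of $\nae f$ from \cite{notaopsus}, whose extra elements are already of the form $\sum_k\tilde q_k^l(f)\bar f^k$ with $\tilde q_k^l\in\mfr_p$, it builds the $\A_e$-versal unfolding $\mathcal F$ with $\aecod(f)$ parameters, uses versality to write any $m$-parameter stable unfolding, up to equivalence of unfoldings, as $\alpha^*\mathcal F$, then uses stability together with the minimality of $m$ to show the $m\times m$ block of $d\alpha(0)$ is invertible, and finally reparametrizes by $\tilde\alpha^{-1}$ (a $\lambda$-equivalence). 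That single application of the versality theorem delivers the analytic normal form in one step; without it, or an explicit finite-determinacy argument adapted to equivalence of unfoldings, your convergence problem remains open.
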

\begin{proof}
Let $a = \aecod (f) < \infty$.
In \cite{notaopsus} it is proven that
$$\nae f = \linsp{\bbk}{\bar f^k(x)}_{k=1}^m + \linsp{\bbk}{\sum_{k=1}^m\tilde q_k^{l}(f(x))\bar f^k(x)}_{l=1}^{a-m}$$
with $\tilde q_k^l\in \mfr_{p}$ for $1\leq k\leq m$ and $1\leq l \leq a-m$.
Therefore, the unfolding $\mathcal F\colon (\bbk^n\times\bbk^a,0)\to(\bbk^p\times\bbk^a,0)$ given by
\begin{align*}
\mathcal F(x,\mu) &= \left(f(x) + \sum_{k=1}^m \mu_k \bar f^k(x) + \sum_{l=1}^{a-m} \mu_{m+l}\sum_{k=1}^m\tilde q_k^{l}(f(x))\bar f^k(x),\mu\right)\\
& = \left( f(x) + \sum_{k=1}^m \left(\mu_k + \sum_{l=1}^{a-m}\mu_{m+l}\tilde q_k^l(f(x))\right)\bar f^k(x),\mu\right)
\end{align*}
is a versal unfolding, which means that in particular every $m$-parameter unfolding of $f$ is equivalent as an unfolding (therefore $\lambda$-equivalent) to one of the form
$$\alpha^*\mathcal F(x,\lambda) = \left( f(x) + \sum_{k=1}^m \left(\alpha_k(\lambda) + \sum_{l=1}^{a-m}\alpha_{m+l}(\lambda)\tilde q_k^l(f(x))\right)\bar f^k(x),\lambda\right)$$
for some $\alpha\colon(\bbk^m,0)\to(\bbk^m\times\bbk^{a-m},0)$ given by $\alpha = (\alpha_1,\ldots,\alpha_{a})$.
In particular, for $\alpha^*\mathcal F$ to be stable necessarily it must satisfy the following (see Sections 4.2 and 4.3 and Lemma 5.5 in \cite{nunomond}):
\begin{multline*}
\theta(f) = \tke f + \linsp{\bbk}{\dpar{}{X_j}}_{j=1}^p + \\  \linsp{\bbk}{\sum_{k=1}^m \left(\dpar{\alpha_k}{\lambda_b}(0) + \sum_{l=1}^{a-m}\dpar{\alpha_{m+l}}{\lambda_b}(0)\tilde q_k^l(f(x))\right)\bar f^k(x)}_{b=1}^m
\end{multline*}

Since $\dpar{\alpha_l}{\lambda_b}(0)\tilde q_k^l(f(x))\bar f^k(x) \in \tke f$ for all $1\leq b,k\leq m$ and $1\leq l\leq a-m$, and $\bar f^k$ are picked such that they are minimal to satisfy \cref{eq_stable_generators}, then the matrix $(\dpar{\alpha_k}{\lambda_b}(0))_{k,b = 1}^m$ must be invertible, hence $\tilde \alpha \colon(\bbk^m,0)\to(\bbk^m,0)$ given by $\tilde \alpha = (\alpha_1,\ldots,\alpha_m)$ is a germ of a diffeomorphism, and so it has inverse $\tilde \alpha^{-1}$.

Define now $\Psi(X,\Lambda) = (X,\tilde\alpha(\Lambda))$ and $\Phi(x,\lambda) = (x,\tilde\alpha^{-1}(\lambda))$, which are diffeomorphisms of $\lambda$-equivalence, and let $\beta_l(\lambda) = \alpha_{m+l} \circ \tilde \alpha^{-1}(\lambda)$ for each $1\leq l \leq a-m$, then:

$$\Psi\circ \alpha^*\mathcal F\circ \Phi (x,\lambda) = \left( f(x) + \sum_{k=1}^m \left(\lambda_k + \sum_{l=1}^{a-m}\beta_{l}(\lambda)\tilde q_k^l(f(x))\right)\bar f^k(x),\lambda\right)$$
As $\beta_l(0)=0$, we can write $\beta_l(\lambda) = \sum_{s=1}^m \lambda_s \tilde\beta_l^s(\lambda)$ for some $\tilde\beta_l^s\in \ofu_m$, and then define $ q_k^s(X,\Lambda) = \sum_{l=1}^{a-m}\tilde\beta_l^s(\Lambda)\tilde q_k^l(X)$, which all satisfy $ q_k^s(X,0)=\sum_{l=1}^{a-m}\tilde\beta_l^s(0)\tilde q_k^l(X)\in\mfr_{p}$ and finally we have that $F$ is $\lambda$-equivalent to
$$\Psi\circ \alpha^*\mathcal F\circ \Phi (x,\lambda) = \left( f(x) + \sum_{k=1}^m \left(\lambda_k + \sum_{s=1}^{m}\lambda_s q_k^s(f(x),\lambda)\right)\bar f^k(x),\lambda\right)$$
as we required.
\end{proof}

The unfoldings in this form, although they are the most natural unfoldings to take in the practice, do not seem to be enough for studying all the possible cases regarding the relation of substantiality and weighted-homogeneity.
On one hand, non $\A$-finite map-germs might admit different stable unfoldings which are not $\lambda$-related to the ones in this class, but as we will see in this case these will be enough for our purposes.
On the other hand, $\lambda$-equivalence only preserves substantiality in the case of 1-parameter stable unfoldings.
But $\lambda$-equivalence does preserve weak substantiality in general, and this already becomes useful in the practical application of the following results.

\begin{example}
By \cref{prop_normal_unfolding} and using the computations from \cref{ex_h2}, all $2$-parameter stable unfoldings of $f(x,y) = (x,y^3, y^5+xy)$ are $\lambda$-equivalent to one of the form
$$\left(x, y^3 + \left(\lambda_1 + \sum_{s=1}^2\lambda_sq_1^s(f(x,y),\lambda)\right)y, y^5+xy + \left(\lambda_2 + \sum_{s=1}^2\lambda_sq_2^s(f(x,y),\lambda)\right)y^2,\lambda\right)$$ 
\end{example}


\begin{definition}
Assume $\mononp f$ is weighted-homogeneous with variable weights $w_1,\ldots,w_n$ and component degrees $d_1,\ldots, d_p$.
Then, using the previous notation, each $\bar f^k$ is weighted-homogeneous of some degree $d_{p+k}$.
We say that $f$ has {\it good weights} if for each $k=1,\ldots,m$ we have $d_{p+k}\neq d_{j_k}$.
If $f$ is quasi-homogeneous, we say that it has good weights if the weighted-homogeneous map that it is $\A$-equivalent to does.
\end{definition}

\begin{example}
Continuing with \cref{ex_h2}, we had that $f(x,y) = (x,y^3,y^5+xy)$ is a weighted-homogeneous map-germ with weights $w_1 = 4, w_2 = 1$ for $x$ and $y$ respectively, and degrees $d_1 = 4, d_2 = 3, d_3= 5$ for $X_1,X_2,X_3$ respectively.

We also had $\bar f^1 = y\dpar{}{X_2}$ and $\bar f^2 = y^2\dpar{}{X_3}$.
Here $j_1 = 2, j_2=3$ and $d_{4} = 1 < 3 = d_{j_1}, d_5 = 2 < 5 = d_{j_2}$, so $f$ has good weights.
In particular, this means that one of its stable unfoldings
$$F(x,y,\lambda) = (x,y^3+\lambda_1y,y^5+xy+\lambda_2y^2,\lambda)$$
is weighted-homogeneous by assigning weights $2$ and $3$ to $\lambda_1$ and $\lambda_2$ respectively.
Therefore, $F$ is substantial (recall \cref{ex_wh_one_substantial}).
The question now is: what other unfoldings are (weakly) substantial? We will see that all unfoldings of a weighted-homogeneous $\A$-finite map are either substantial or weakly substantial.
\end{example}

\begin{remark}\label{rem_good_weights_conditions}
In the case that $f$ admits an $m$-parameter stable unfolding with $(n+m,p+m)$ in the nice dimensions, or if $n=p$ and $f$ has corank 1, then if $f$ is quasi-homogeneous it has good weights. 
In particular, $d_{j_k} > d_{p+k}$ for all $1\leq k\leq m$, see Section 7.4 in \cite{nunomond}.
\end{remark}

\begin{theorem}\label{thm_wh_imp_subs}
If $f$ is a $\K_e$-finite weighted-homogeneous map-germ with good weights, then any stable unfolding $F$ in the form of \cref{eq_good_unfolding} is substantial. 
\end{theorem}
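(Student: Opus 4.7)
The plan is to build a substantial vector field for $F$ by starting with the natural Euler field associated to the weights and correcting it using the stability of $F$. By the good weights hypothesis, the integers $w_{n+k}:=d_{j_k}-d_{p+k}$ are all non-zero, so one can form the candidate pair
$$\eta_E = \sum_{j=1}^p d_j X_j \dpar{}{X_j} + \sum_{k=1}^m w_{n+k}\Lambda_k \dpar{}{\Lambda_k}, \qquad \xi_E = \sum_{i=1}^n w_i x_i \dpar{}{x_i} + \sum_{k=1}^m w_{n+k}\lambda_k \dpar{}{\lambda_k},$$
which would already be $F$-related if all $q_k^s$ vanished. Applying the Euler identity for $f$ and for each monomial $x^{\alpha_{j_k}}$, together with the chain rule on $q_k^s(f(x),\lambda)$, a direct calculation shows that the defect $E:=\eta_E\circ F - dF(\xi_E)$ has zero last $m$ components and equals
$$E \;=\; \sum_{k,s=1}^m \lambda_s\, P_k^s(f(x),\lambda)\,\bar f^k,\qquad P_k^s \;:=\; (w_{n+k}-w_{n+s})q_k^s \;-\; (\mathcal E_X + \mathcal E_\Lambda)q_k^s,$$
with $\mathcal E_X, \mathcal E_\Lambda$ the weighted Euler operators in the target variables.

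Next I would argue that $E$ lies in the smaller module $F^*(\mfr_\Lambda\mfr_{p+m})\,\theta(F)$. Since $q_k^s(X,0)\in\mfr_p$ and the Euler operators kill constants, $P_k^s$ vanishes at the origin, so $P_k^s\in\mfr_{p+m}$. Moreover, each $f_j = F_j - \sum_{k'}A_{k'}(f,\lambda)\,\bar f^{k'}_j$ belongs to $F^*(\mfr_{p+m})$ (because $A_{k'}\in\mfr_\Lambda$, so $A_{k'}(f,\lambda)\in\mfr_\lambda\subseteq F^*(\mfr_{p+m})$), and hence the composition $P_k^s(f(x),\lambda)$ lies in $F^*(\mfr_{p+m})$ as well. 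Combining this with $\lambda_s = F^*(\Lambda_s)$ yields $E\in F^*(\mfr_\Lambda\mfr_{p+m})\,\theta(F)$.

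Now I invoke stability of $F$: from $\theta(F) = tF(\theta_{n+m}) + wF(\theta_{p+m})$ together with the module identities $F^*(I)\cdot tF(\xi) = tF(F^*(I)\xi)$ and $F^*(I)\cdot wF(\eta) = wF(I\eta)$, multiplying by $F^*(\mfr_\Lambda\mfr_{p+m})$ furnishes a decomposition
$$E \;=\; tF(\xi'') + wF(\eta''),\qquad \xi''\in\mfr_\lambda\theta_{n+m},\quad \eta''\in\mfr_\Lambda\mfr_{p+m}\,\theta_{p+m}.$$
Setting $\eta:=\eta_E - \eta''$ and $\xi:=\xi_E + \xi''$ gives $\eta\circ F - dF(\xi) = E - (tF(\xi'')+wF(\eta''))=0$, so $\eta\in\Lift(F)$. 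The substantial form follows since $d\Lambda_k(\eta) = w_{n+k}\Lambda_k - \eta''_{p+k}$ decomposes directly as $\Lambda_k u_k + q_k$ with $u_k := w_{n+k}$ a non-zero constant (hence a unit in $\ofu_{p+m}$) and $q_k := -\eta''_{p+k}\in\mfr_\Lambda\mfr_{p+m}$.

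The main obstacle is the careful bookkeeping in the first step, where the Euler operators must be propagated through the composition $q_k^s(f(x),\lambda)$ to obtain the compact formula for $P_k^s$ without spurious terms. The critical observation that unlocks the rest of the argument is that $P_k^s(f,\lambda)$ picks up an \emph{extra} $F^*(\mfr_{p+m})$ factor beyond the obvious $F^*(\mfr_\Lambda)$ factor coming from $\lambda_s$; this double vanishing is what forces the correction $\eta''_{p+k}$ into the smaller ideal $\mfr_\Lambda\mfr_{p+m}$ and thereby leaves the leading $w_{n+k}\Lambda_k$ term of $d\Lambda_k(\eta)$ undisturbed, which is exactly what substantiality demands.
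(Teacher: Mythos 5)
Your proposal is correct, and it takes a genuinely different route from the paper's proof. The paper starts from the Euler fields of $f$ extended by \emph{zero} in the parameter directions, computes the defect $\bar\eta\circ F-dF(\bar\xi)=\sum_k\lambda_k(\gamma^k+\tau^k)$, absorbs each $\gamma^k+\tau^k$ by stability as $tF(\xi^k)+\eta^k\circ F$, and then must pin down the constants $\eta^{k_0}_{p+v}(0)$: this is done by evaluating at $\lambda=0$, pushing everything else into $\tke f+\linsp{\bbk}{\dpar{}{X_1},\ldots,\dpar{}{X_p}}$, and invoking the \emph{minimality} of the set $\bar f^1,\ldots,\bar f^m$ in \cref{eq_stable_generators} to force $\eta^{k_0}_{p+k_0}(0)=d_{j_{k_0}}-d_{p+k_0}$ and the off-diagonal constants to vanish. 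You instead build the expected parameter weights $w_{n+k}=d_{j_k}-d_{p+k}$ into the candidate field from the start; your computation of the defect $E=\sum_{k,s}\lambda_s P_k^s(f,\lambda)\bar f^k$ is correct, and the key observation — that $P_k^s(f,\lambda)\in F^*(\mfr_{p+m})$ because $q_k^s(X,0)\in\mfr_p$ and $f_j\in F^*(\mfr_{p+m})$, so $E\in F^*(\mfr_\Lambda\mfr_{p+m})\theta(F)$ — makes the correction's $\Lambda$-components land in $\mfr_\Lambda\mfr_{p+m}$ automatically, so no minimality argument is needed and good weights enters only through $w_{n+k}\neq 0$. The one step to state more carefully is the decomposition $E=tF(\xi'')+wF(\eta'')$: since $wF(\theta_{p+m})$ is only an $\ofu_{p+m}$-module via $F$, ``multiplying the stability equation by the ideal'' is not literally valid; you must first write $E$ as a finite sum $\sum_i F^*(h_i)\sigma_i$ with $h_i\in\mfr_\Lambda\mfr_{p+m}$ and $\sigma_i\in\theta(F)$ (absorbing all $\ofu_{n+m}$-coefficients into $\sigma_i$, which is possible since $\theta(F)$ is an $\ofu_{n+m}$-module and your $E$ is explicitly of this form), and then decompose each $\sigma_i$ and apply the identities $F^*(h)tF(\xi)=tF(F^*(h)\xi)$, $F^*(h)wF(\eta)=wF(h\eta)$ termwise. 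With that precision added, your argument is complete; it is shorter than the paper's, while the paper's version has the side benefit of exhibiting explicitly how the constants $d_{j_k}-d_{p+k}$ arise from the basis of the normal space.
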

\begin{proof}
First we will obtain a candidate liftable vector field by replicating the proofs of Theorem 1 from \cite{nunoosetlifts} and Theorem 2 from \cite{nishimuralifts} in order to lift the Euler vector fields of $f$ to some $F$-related vector fields.
Then we will use the $\K_e$-finiteness of $f$ to check that this vector field satisfies the conditions we look for.

Using the previous notation, we define the Euler vector fields
\begin{align*}
\bar \eta(X) &= \sum_{j=1}^p d_jX_j\dpar{}{X_j}\\
\bar \xi(x) &= \sum_{i=1}^n w_ix_i\dpar{}{x_i}
\end{align*}
which satisfy $\bar\eta\circ f = df(\bar\xi)$.
Now, using the natural inclusions we can consider $\bar \eta \equiv (\bar \eta,0)$ and $\bar \xi \equiv (\bar \xi,0)$ as vector fields in $\theta_{p+m}$ and $\theta_{n+m}$ respectively by just adding zeroes.

We now want to compute the difference $\bar\eta(F) - dF(\bar \xi)$.
For simplicity, we will omit the variable $x$ when writing $f$ and $\bar f^k$ if there is no confusion.
Recall that $\bar f^k$ was picked so that it is a vector field which has all entries zero except for a single monomial in the component $1\leq j_k\leq p$.
On one hand we have that $dF(\bar\xi)$ is equal to
\begin{multline*}
\sum_{i=1}^n w_ix_i\left(\dpar{f}{x_i} + \sum_{k=1}^m\sum_{s=1}^m \lambda_s \dpar{q_k^s(f(x),\lambda)}{x_i}\bar f^k + \sum_{k=1}^m\left(\lambda_k+\sum_{s=1}^m\lambda_sq_k^s(f,\lambda)\right)\dpar{\bar f^k}{x_i} \right) \\
= \bar \eta (f) + \sum_{k,s=1}^m\lambda_s\bar f^k \sum_{i=1}^n x_iw_i \dpar{q_k^s(f(x),\lambda)}{x_i} +\sum_{k=1}^m\left(\lambda_k+\sum_{s=1}^m\lambda_sq_k^s(f,\lambda)\right)d_{p+k}\bar f^k
\end{multline*}
where the equality comes from the fact that $\bar \eta$ and $\bar \xi$ are $f$-related (since $f$ is weighted-homogeneous) and that $\bar f^k$ was picked so that its only monomial has weighted degree $d_{p+k}$. 
Now using the chain rule and again the fact that $f$ is weighted-homogeneous:
\begin{align*}
\sum_{i=1}^n x_iw_i \dpar{q_k^s(f(x),\lambda)}{x_i} &= \sum_{i=1}^n x_iw_i \sum_{l=1}^p\dpar{q_k^s}{X_l}(f,\lambda)\dpar{f_l}{x_i} \\
&= \sum_{l=1}^p \dpar{q_k^s}{X_l}(f,\lambda)\sum_{i=1}^n x_iw_i\dpar{f_l}{x_i} \\
&=  \sum_{l=1}^p \dpar{q_k^s}{X_l}(f,\lambda) d_lf_l.
\end{align*}
Hence, we have that $dF(\bar\xi)$ is equal to
\begin{equation*}
\bar \eta (f) + \sum_{k,s=1}^m\lambda_s\bar f^k \sum_{l=1}^p \dpar{q_k^s}{X_l}(f,\lambda) d_lf_l +\sum_{k=1}^m\left(\lambda_k+\sum_{s=1}^m\lambda_sq_k^s(f,\lambda)\right)d_{p+k}\bar f^k.
\end{equation*}
On the other hand, we have that $\bar \eta(F)$ is equal to
\begin{equation*}
\bar\eta(f) + \sum_{k=1}^m\left(\lambda_k +\sum_{s=1}^m \lambda_sq_k^s(f,\lambda)\right)d_{j_k}\bar f^k
\end{equation*}
using again the fact that $\bar f^k$ is just a single monomial in the $j_k$ component and zeroes in the rest of the entries.
Therefore, $\bar\eta(F) - dF(\bar\xi)$ is equal to
\begin{multline*}
\sum_{k=1}^m\left(\lambda_k + \sum_{s=1}^m\lambda_sq_k^s(f,\lambda)\right)(d_{j_k}-d_{p+k})\bar f^k - \sum_{k,s=1}^m\lambda_s\bar f^k\sum_{l=1}^p\dpar{q_k^s}{X_l}(f,\lambda)d_lf_l\\
= \sum_{k=1}^m(d_{j_k}-d_{p+k})\lambda_k\bar f^k + \sum_{k,s=1}^m\left((d_{j_k}-d_{p+k})q_k^s(f,\lambda) - \sum_{l=1}^p\dpar{q_k^s}{X_l}(f,\lambda)d_lf_l\right)\lambda_s\bar f^k
\end{multline*}
In the last term, we can switch the indexes $s$ and $k$ to obtain that $\bar\eta(F) - dF(\bar\xi)$ is equal to
\begin{equation*}
\sum_{k=1}^m\lambda_k\left((d_{j_k}-d_{p+k})\bar f^k + \sum_{s=1}^m\left((d_{j_s}-d_{p+s})q_s^k(f,\lambda) - \sum_{l=1}^p\dpar{q_s^k}{X_l}(f,\lambda)d_lf_l\right)\bar f^s\right)
\end{equation*}

Now, for each $k = 1,\ldots,m$ write
\begin{align*}
\tau^k(x,\lambda) & =  \sum_{s=1}^m\left((d_{j_s}-d_{p+s})q_s^k(f,\lambda) - \sum_{l=1}^p\dpar{q_s^k}{X_l}(f,\lambda)d_lf_l\right)\bar f^s\\
\gamma^k(x,\lambda) & = (d_{j_k}-d_{p+k})\bar f^k 
\end{align*}
so that $\bar\eta(F) - tF(\bar\xi) = \sum_{k=1}^m\lambda_k(\gamma^k + \tau^k)$.
Notice that $\tau^k(x,0) \in f^*\m_{p}\theta(f) \subseteq \tke f$.

Since $F$ is a stable unfolding, $\gamma^k + \tau^k\in\tae F$ and so there exist some $\xi^k = \sum_{i=1}^{n}\xi_i^k\dpar{}{x_i}+\sum_{i=1}^m\xi_{n+i}^k\dpar{}{\lambda_i}\in\theta_{n+m}$ and $\eta^k = \sum_{j=1}^p \eta_j^k\dpar{}{X_j}+\sum_{j=1}^m \eta_{p+j}^k\dpar{}{\Lambda_j}\in\theta_{p+m}$ such that
\begin{equation}\label{eq_gammak}
\gamma^k + \tau^k = tF(\xi^k) + \eta^k(F).
\end{equation}
From this we can already obtain our candidate for a substantial, liftable vector field, since
$$\bar\eta(F) - tF(\bar\xi) = \sum_{k=1}^m\lambda_k(tF(\xi^k)+\eta^k(F))$$
and therefore
$$(\bar\eta - \sum_{k=1}^m\Lambda_k \eta^k)\circ F = tF(\sum_{k=1}^m\lambda_k\xi^k - \bar \xi)$$
Call $\eta = \bar\eta - \sum_{k=1}^m\Lambda_k \eta^k$ and $\xi = \sum_{k=1}^m\lambda_k\xi^k - \bar \xi$, then what we just showed is that $\eta$ and $\xi$ are $F$-related.
Now we have $d\Lambda_{k_0}(\eta) = \sum_{k=1}^m\Lambda_k\eta_{p+k}^{k_0}$, so in order to prove that $\eta$ is a substantial vector field we only need to check for every $1\leq {k_0} \leq m$ that $\eta_{p+k_0}^{k_0}$ is a unit (i.e., that $\eta_{p+k_0}^{k_0}(0)\neq 0$)  and $\eta_{p+k}^{k_0}(0) = 0$ for each $k\neq k_0$. 

We will do this by reaching a contradiction with the fact that the $\bar f^1,\ldots,\bar f^m$ are picked as in \cref{eq_stable_generators}.
Fix $k_0 \in\lbrace 1,\ldots,m\rbrace$.
First, looking at the last $m$ components in \cref{eq_gammak}, we have that for each $k=1,\ldots,m$
\begin{equation}\label{eq_xi_equal_eta}
\xi_{n+k}^{k_0}(x,\lambda) = \eta_{p+k}^{k_0}\circ F.
\end{equation}
Let's expand the right-hand side of \cref{eq_gammak}.
Again, we will omit $x$ and $\lambda$ from $f,\bar f^k$ and $\xi^{k_0}$ when convenient if there is no confusion.
On one side, $dF(\xi^{k_0})$ is equal to
\begin{multline}\label{eq_df_xik}
\sum_{i=1}^n\dpar{f}{x_i}\xi_i^{k_0} + \sum_{i=1}^n\xi^{k_0}_i\sum_{v=1}^m\left(\sum_{s=1} ^m\lambda_s \bar f^v\sum_{l=1}^p\dpar{q_v^s}{X_l}(f,\lambda)\dpar{f_l}{x_i} + \left(\lambda_v + \sum_{s=1}^m\lambda_sq_s^v(f,\lambda)\right)\dpar{\bar f^v}{x_i}\right)\\
+ \sum_{i=1}^m\sum_{v=1}^m\dpar{\lambda_v+\sum_{s=1}^m\lambda_sq_v^s(f,\lambda)}{\lambda_i}\bar f^v\xi^{k_0}_{n+i}+ \sum_{v=1} ^m \xi^{k_0}_{n+v} \dpar{}{\Lambda_{v}}
\end{multline}
Notice that here we are mixing vector field notations for convenience: $f$ and $\bar f^1,\ldots,\bar f^m$ act as vector fields in $\ofu_{n}^{p+m}$ whose last $m$ components (corresponding to $\dpar{}{\Lambda_1},\ldots,\dpar{}{\Lambda_m}$) are all zero.
We expand the first term in the second line of the last equation and use \cref{eq_xi_equal_eta} to obtain
\begin{multline*}\label{eq_derivative_expansion}
\sum_{i,v=1}^m\dpar{(\lambda_v+\sum_{s=1}^m\lambda_sq_v^s(f,\lambda))}{\lambda_i}\bar f^v\xi^{k_0}_{n+i} = 
\sum_{v=1}^m\eta^{k_0}_{n+v}(F)\bar f^v + \sum_{i,v,s=1}^m\dpar{(\lambda_sq_v^s(f(x),\lambda))}{\lambda_i}\xi^{k_0}_{n+i}\bar f^v
\end{multline*}

Write $\rho(x,\lambda) = \sum_{i,v,s=1}^m\dpar{\lambda_sq_v^s(f(x),\lambda)}{\lambda_i}\xi^{k_0}_{n+i}\bar f^v$.
Finally, we take a look at the first $p$ components in \cref{eq_gammak} and take $\lambda = 0$ so that, using \cref{eq_xi_equal_eta} and \cref{eq_df_xik}, we obtain
\begin{multline*}
(d_{j_{k_0}}-d_{p+k_0})\bar f^{k_0} + \tau^{k_0}(x,0) = \\ \sum_{i=1}^n\dpar{f}{x_i}\xi_i^{k_0}(x,0) + \sum_{v=1}^m \eta_{p+v}^{k_0}(f(x),0)\bar f^v +  \rho(x,0) + \sum_{j=1}^p \eta^{k_0}(f(x),0)
\end{multline*}
which is equivalent to
\begin{multline*}
(d_{j_{k_0}}-d_{p+k_0})\bar f^{k_0} -\sum_{v=1}^m \eta^{k_0}_{p+v}(0)\bar f^v = \\ \sum_{i=1}^n\dpar{f}{x_i}\xi_i^{k_0}(x,0) + \sum_{v=1}^m (\eta_{p+v}^{k_0}(f(x),0) -\eta^{k_0}_{p+v}(0))\bar f^v +  \rho(x,0) + \sum_{j=1}^p \eta^{k_0}(f(x),0) - \tau^{k_0}(x,0)
\end{multline*}
The left-hand side of this equation is a linear combination in $\bbk$ of the $\bar f^1,\ldots,\bar f^m$.
If we prove that the right-hand side is an element of $\tke f + \linsp{\bbk}{\dpar{}{X_1},\ldots,\dpar{}{X_p}}$, then since $m$ was picked to be minimal as to satisfy \cref{eq_stable_generators}, necessarily all the coefficients in the left-hand side will be zero. 
In particular this will mean that $\eta_{p+k_0}^{k_0}(0) = d_{j_{k_0}}-d_{p+k_0} \neq 0$, since $f$ has good weights by hypothesis, and that for all $v\neq k_0$ we have $\eta_{p+v}^{k_0}(0) =0$, as we wanted to prove.

Now, to see that the right-hand side belongs to $\tke f$ we look at its individual summands: the first term belongs to $tf(\theta_n)$, the second and fourth belong to $f^\star\mfr_p\theta(f) +\linsp{\bbc}{\dpar{}{X_1},\ldots,\dpar{}{X_p}}$, and we already saw that $\tau(x,0)\in f^\star\mfr_p\theta(f)$.
Therefore it only remains to see that $\rho(x,0) \in \tke f + \linsp{\bbc}{\dpar{}{X_1},\ldots,\dpar{}{X_p}}$.
We have that $\rho(x,\lambda)$ is equal to
\begin{multline*}\sum_{i,v,s=1} \dpar{(\lambda_s q_v^s(f(x),\lambda)}{\lambda_i} \xi_{n+i}^{k_0} \bar f^v = \sum_{v=1}^m  \sum_{i=1}^m\sum_{s=1}^m\left(\dpar{\lambda_s}{\lambda_i} q_v^s(f(x),\lambda) + \lambda_s\dpar{q_v^s(f(x),\lambda)}{\lambda_i}\right)\xi_{n+i}^{k_0} \bar f^v
\end{multline*}
therefore
$$\rho(x,0) = \sum_{v=1}^m \sum_{i=1}^m\sum_{\substack{s=1\\s\neq i}}^m q_v^s(f(x),0)\xi_{n+i}^{k_0}\bar f^v$$
which belongs to $f^*\mfr_p\theta(f)\subseteq \tke f$ since $q_v^s(X,0)\in\mfr_p$.

\end{proof}






\begin{coro}\label{coro_wh_imp_weakly_subs}
Every stable unfolding with minimal number of parameters of an $\A$-finite, weighted-homogeneous map-germ with good weights is weakly substantial.
\end{coro}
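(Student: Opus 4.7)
The plan is to chain together three results already established in the excerpt. Let $F\colon(\bbk^n\times\bbk^m,0)\to(\bbk^p\times\bbk^m,0)$ be a stable unfolding of $f$ with the minimal number of parameters. Since $f$ is $\A$-finite, \cref{prop_normal_unfolding} applies, so $F$ is $\lambda$-equivalent to some unfolding $F'$ written in the normal form of \cref{eq_good_unfolding}.

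Because $f$ is weighted-homogeneous with good weights and $\A$-finite (hence in particular $\K_e$-finite), \cref{thm_wh_imp_subs} applies to $F'$: it yields that $F'$ is substantial. By \cref{rem_substantial_nonzero_ev}, any substantial unfolding is also weakly substantial, so $F'$ is weakly substantial.

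Finally, we use \cref{prop_weak_subs_preserved}: since $F$ and $F'$ are $\lambda$-equivalent $m$-parameter stable unfoldings and $F'$ is weakly substantial, so is $F$. This concludes the proof.

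The argument is essentially bookkeeping of the previous results; there is no real obstacle since the work is concentrated in \cref{thm_wh_imp_subs}. The only subtlety worth flagging is the role of the minimality of $m$: \cref{prop_normal_unfolding} is stated using the minimal generating set $\bar{f}^1,\ldots,\bar{f}^m$, so the normal-form reduction applies precisely to unfoldings with the minimal number of parameters, which is why the hypothesis appears in the statement of the corollary.
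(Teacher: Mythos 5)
Your proof is correct and follows essentially the same route as the paper: reduce to the normal form of \cref{eq_good_unfolding} via \cref{prop_normal_unfolding}, apply \cref{thm_wh_imp_subs} to get substantiality (hence weak substantiality), and transfer back through $\lambda$-equivalence by \cref{prop_weak_subs_preserved}. Your closing remark on why minimality of the number of parameters is the right hypothesis is accurate and consistent with the paper's setup.
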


\begin{proof}
By \cref{prop_normal_unfolding}, every stable unfolding with minimal number of parameters is $\lambda$-equivalent to one in the form of \cref{eq_good_unfolding}, and by \cref{thm_wh_imp_subs} all of these are substantial, therefore weakly substantial.
Now by \cref{prop_weak_subs_preserved}, weak substantiality is preserved by $\lambda$-equivalence and so the result follows.
\end{proof}

\begin{coro}\label{coro_every_unf_weakly_subs}
If $\mononp f$ is $\A$-finite and quasi-homogeneous with good weights, then every stable unfolding with minimal number of parameters of $f$ is weakly substantial.
\end{coro}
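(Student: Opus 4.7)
The plan is to reduce immediately to the previous corollary by transporting the situation through the $\mathcal A$-equivalence that makes $f$ weighted-homogeneous. Since $f$ is quasi-homogeneous, there exist germs of diffeomorphisms $\psi\colon(\bbk^p,0)\to(\bbk^p,0)$ and $\phi\colon(\bbk^n,0)\to(\bbk^n,0)$ such that $g := \psi\circ f\circ\phi^{-1}$ is weighted-homogeneous, and by the convention following the definition of good weights, $g$ has good weights.

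Now let $F(x,\lambda) = (f_\lambda(x),\lambda)$ be any stable unfolding of $f$ with minimal number of parameters $m$. Define the unfoldings of $\psi$ and $\phi$ given by $\Psi(X,\Lambda) = (\psi(X),\Lambda)$ and $\Phi(x,\lambda) = (\phi(x),\lambda)$, and set
\[
G := \Psi\circ F\circ \Phi^{-1}, \qquad G(x,\lambda) = (\psi\circ f_\lambda\circ\phi^{-1}(x),\lambda).
\]
Then $G$ is an $m$-parameter stable unfolding of $g$ (stability is preserved by $\A$-equivalence), and $F$ and $G$ are $\lambda$-equivalent via $\Psi$ and $\Phi$ (with $l$ the identity on the parameter space). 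Moreover $G$ has the minimal number of parameters: if $g$ admitted a stable unfolding with fewer parameters, then conjugating back by $\Psi^{-1}$ and $\Phi$ would give one for $f$, contradicting minimality of $F$.

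Having identified $G$ as an $m$-parameter minimal stable unfolding of the weighted-homogeneous map-germ $g$ with good weights, \cref{coro_wh_imp_weakly_subs} applies and gives that $G$ is weakly substantial. Finally, \cref{prop_weak_subs_preserved} tells us that weak substantiality is preserved under $\lambda$-equivalence, so $F$ is weakly substantial as well.

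There is no serious obstacle here: everything reduces to applying \cref{coro_wh_imp_weakly_subs} after transporting $F$ to an unfolding of the weighted-homogeneous model of $f$. The only points requiring a moment's care are (i) that the $\lambda$-equivalence built from $\Psi$ and $\Phi$ genuinely relates $F$ and $G$ in the sense of the definition (immediate, as both act as the identity on the $\lambda$-coordinate), and (ii) that minimality of the number of parameters is an $\A$-invariant of the underlying germ, which follows because the number of parameters of a minimal stable unfolding equals $\dim_{\bbk}\theta(f)/(\tke f + \tae f)$, an $\A$-invariant.
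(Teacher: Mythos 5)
Your proof is correct and follows essentially the same route as the paper: transport $F$ to an unfolding $G$ of the weighted-homogeneous representative via product diffeomorphisms $\Psi,\Phi$, apply \cref{coro_wh_imp_weakly_subs} to $G$, and pull back weak substantiality through $\lambda$-equivalence using \cref{prop_weak_subs_preserved}. Your explicit check that $G$ again has the minimal number of parameters (via the $\A$-invariance of $\dim_\bbk\theta(f)/(\tke f+\tae f)$) is a detail the paper leaves implicit, and it is a welcome addition.
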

\begin{proof}
Since $f$ is $\A$-equivalent to a weighted-homogeneous $\mononp g$, let $\psi\colon(\bbk^p,0)\to(\bbk^p,0)$ and $\phi\colon(\bbk^n,0)\to(\bbk^n,0)$ be germs of diffeomorphism such that $\psi\circ f \circ \phi = g$.
Let $F$ be a stable unfolding of $f$, and define $\Psi(X,\Lambda) = (\psi(X),\Lambda)$ and $\Phi(x,\lambda) = (\phi(x),\lambda)$.
Then $G = \Psi\circ F\circ \Phi$ is an unfolding of $g$ and is therefore weakly substantial by \cref{coro_wh_imp_weakly_subs}.
Since $G$ is $\lambda$-equivalent to $F$, the result follows by \cref{prop_weak_subs_preserved}.
\end{proof}

\begin{coro}\label{coro_every_opsu_subs}
If $\mononp f$ is $\A$-finite and quasi-homogeneous with good weights and admits an OPSU, then every OPSU of $f$ is substantial.
\end{coro}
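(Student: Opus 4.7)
The plan is to reduce this to \cref{coro_every_unf_weakly_subs} together with the equivalence, noted in the remark after \cref{rem_substantial_nonzero_ev}, that substantiality and weak substantiality coincide for $1$-parameter stable unfoldings.

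First I would observe that if $f$ admits an OPSU then necessarily $f$ is not itself stable (otherwise the minimal number of parameters for a stable unfolding would be $0$, and no OPSU would exist in the sense required). Hence the minimal number of parameters for a stable unfolding of $f$ is exactly $1$, and every OPSU of $f$ is a stable unfolding of $f$ with the minimal number of parameters.

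Second, since $f$ is $\A$-finite, quasi-homogeneous and has good weights, \cref{coro_every_unf_weakly_subs} applies and tells us that every OPSU of $f$ is weakly substantial.

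Finally, by the remark immediately following \cref{rem_substantial_nonzero_ev}, for any OPSU the notions of weak substantiality and substantiality coincide; consequently every OPSU of $f$ is substantial, as claimed. There is no real obstacle here: the statement is a direct combination of the preceding corollary and the equivalence of the two notions in the $1$-parameter case.
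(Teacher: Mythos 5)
Your argument is correct and is essentially the paper's proof: both deduce that every OPSU is weakly substantial from \cref{coro_every_unf_weakly_subs} and then conclude via the coincidence of weak substantiality and substantiality in the $1$-parameter case. One small inaccuracy worth noting: a stable germ does admit OPSUs (e.g.\ trivial ones), so your parenthetical reason for excluding stability is not right; what actually saves the reduction is that the good-weights hypothesis presupposes $f$ non-stable (the $\bar f^k$ exist), so the minimal number of parameters is $1$ and every OPSU is indeed a minimal-parameter stable unfolding to which \cref{coro_every_unf_weakly_subs} applies.
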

\begin{proof}
Using \cref{coro_every_unf_weakly_subs}, all unfoldings of $f$ are weakly substantial.
By \cref{coro_subs_preserved_opsu}, in this case all weakly substantial unfoldings are substantial, hence the result follows.
\end{proof}

As an application, we can check when a map-germ is not quasi-homogeneous:

\begin{example}
Let $f\colon(\bbc^5,0)\to(\bbc^5,0)$ be the map-germ given by
$$f(x,y,u) = (x^3 +y^3 + u_1x+u_2y-(a u_3+u_3^2)x^2+u_3y^2,xy,u)$$
for $a\neq 0,-1$. 
This appears in \cite{extranicedim} as an example of a corank 2, non-simple map-germ of $\A_e$-codimension 2.
It admits the 1-parameter stable unfolding
$$F(x,y,u,v) = (x^3 +y^3 + u_1x+u_2y-(au_3+u_3^2-v)x^2+u_3y^2,xy,u,v).$$
It is too computationally expensive to obtain directly the liftable vector fields of $F$, but it is easy to see that if $\phi(x,y,u,v) = (x,y,u,v+au_3+u_3^2)$ and $\psi(X,Y,U,V)  = (X,Y,U,V-aU_3-U_3^2)$ then
$$\psi\circ F \circ \phi(x,y,u,v) = (x^3 +y^3 + u_1x+u_2y+vx^2+u_3y^2,xy,u,v)$$
whose liftable vector fields are computed also in \cite{extranicedim}.
Using the isomorphism from \cref{lem_nishimura_lifts}, it is easy to check that $F$ is not substantial, therefore $f$ cannot be quasi-homogeneous with good weights.
\end{example}

\begin{example}
Let $f\colon(\bbc^3,0)\to(\bbc^4,0)$ be the map-germ given by
$$f(x,y,z) = (x,y,yz+z^6+z^8,xz+z^3)$$
which is the $P_3^2$ singularity as it appears in the classification \cite{houstonkirk}.
In the paper, the authors compute that:
$$\mu_I(f) = 4 > 3 = \aecod (f)$$
But since the Mond conjecture has not been proven in the dimension pair $(3,4)$, this is not enough to prove that $f$ is not quasi-homogeneous.
In order to prove this, we can use SINGULAR to easily compute the liftable vector fields of its OPSU:
$$F(x,y,z) = (\lambda,x,y,yz+\lambda z^2+z^6+z^8,xz+z^3)$$
and check that it is not substantial, hence it is not quasi-homogeneous with good weights.
In particular, since $F$ is in the nice dimensions, $f$ cannot be quasi-homogeneous.
\end{example}

\section{Equidimensional, corank 1, $\A$-finite case} \label{section_converse}

In this section we want to prove that, in the analytic, equidimensional, corank 1, $\A$-finite case, if the analytic stratum of the stable unfolding has dimension 0 or if the map-germ has multiplicity 3, then admitting a substantial unfolding is equivalent to being quasi-homogeneous.

\subsection{Minimal stable unfolding case} \label{subsection_minimal}

Here we restrict ourselves to the complex case.
First we assume that $f\colon(\bbc^n,0)\to(\bbc^n,0)$ admits a minimal stable unfolding, that is, it has analytic stratum of dimension zero (recall \cref{def_frelated}).



The goal now is, assuming $f$ admits a minimal substantial unfolding, to find a coordinate system in which there exist $f$-related Euler vector fields whose eigenvalues are all positive integers, since this is equivalent to being weighted-homogeneous (recall the discussion after \cref{def_weighted_homogeneous}).


First we will study the possible eigenvalues of minimal stable singularities.
The only $\A$-classes of stable singularities in this setting are those given by the stable unfoldings with minimal number of parameters of $A_n$ singularities, which are minimal, and prisms of these classes which have analytic stratum of positive dimension.

We check that, after multiplication by a constant, the eigenvalues of a liftable vector field (and its corresponding lowerable vector field) of a minimal stable singularity will be either all zeroes or all positive integer values.
Hence, a substantial vector field of a minimal stable unfolding will be a projectable vector field with positive, rational eigenvalues (positive integers after multiplication by constant).

Since substantial vector fields are projectable, this will mean that if a corank 1 map-germ $f\colon(\bbc^n,0)\to(\bbc^n,0)$ admits a substantial stable unfolding, in particular it will admit $f$-related vector fields $\eta$ and $\xi$ in source and target with positive integer eigenvalues.
Using their Poincaré-Dulac normal forms, the result will follow.


%
%

Consider the $A_n$ singularity $f(x_1) = x_1^{n+1}$, it admits the following stable unfolding:
$$F(x_1,\ldots,x_n) = (x_1^{n+1} + x_2x_1 + \cdots + x_nx_1^{n-1},x_2,\ldots,x_n).$$
The linear parts of the generators of $\Lift(F)$ are computed in several places, for instance in \cite{arnold_wavefronts, bruce_envelopes} amongst others.
From here we get that there exists an $\ofu_n$-basis of $\Lift(F)$ given by $\eta_1,\ldots,\eta_n$, which can be decomposed as $\eta_j = \eta_j^L + \eta_j^{\geq 2}$ with $\eta_j^{\geq 2}$ a vector field with terms only of degree 2 or higher for each $j=1,\ldots,n$, and $\eta_j^L$ satisfying
\begin{equation}\label{eq_ak_lift_1jet}
\begin{split}
\eta_1^L(X) & = (n+1)X_1\dpar{}{X_1} + nX_2\dpar{}{X_2} + \cdots + 2X_n\dpar{}{X_n}\\
\eta_2^L(X) & = (n+1)X_1\dpar{}{X_2} + nX_2\dpar{}{X_3} + \cdots + 3X_{n-1}\dpar{}{X_{n}}\\
& \; \; \vdots \\
\eta_{n-1}^L(X) & = (n+1)X_1\dpar{}{X_{n-1}}+nX_2\dpar{}{X_n}\\
\eta_n^L(X) & = (n+1)X_1\dpar{}{X_n}
\end{split}
\end{equation}

\begin{lemma}\label{lemma_vp_lift}
Any $\eta\in\Lift(F)$ is of the form $\eta = \sum_{j=1}^n b_j(X)\eta_j(X)$, and its eigenvalues are given by the vector $b_1(0)\cdot (n+1,n,\ldots,2)$.
\end{lemma}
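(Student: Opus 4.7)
The first assertion is immediate: since $\eta_1,\ldots,\eta_n$ form an $\ofu_n$-basis of $\Lift(F)$, any liftable vector field $\eta$ can be written uniquely as $\eta=\sum_{j=1}^n b_j(X)\eta_j(X)$ with $b_j\in\ofu_n$. The interesting claim is the eigenvalue computation, and my plan is to reduce it to reading off the $1$-jet of $\eta$ as a linear map.

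First I would note that, because $\eta_j=\eta_j^L+\eta_j^{\geq 2}$ with $\eta_j^L$ linear (no constant term) and $\eta_j^{\geq 2}$ of order at least $2$, for each $j$ we have
\[
j^1\bigl(b_j(X)\eta_j(X)\bigr)=j^1\bigl(b_j(X)\eta_j^L(X)\bigr)=b_j(0)\,\eta_j^L(X),
\]
since $b_j(X)-b_j(0)\in\mfr_n$ multiplied by a linear vector field contributes only terms of order $\geq 2$. Hence $j^1\eta=\sum_{j=1}^n b_j(0)\,\eta_j^L(X)$, so the matrix of $j^1\eta$ is the linear combination $\sum_{j=1}^n b_j(0)\,M_j$, where $M_j$ is the matrix associated with $\eta_j^L$.

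Next I would analyse the shape of each $M_j$ directly from \cref{eq_ak_lift_1jet}. The matrix $M_1$ is the diagonal matrix $\operatorname{diag}(n+1,n,n-1,\ldots,2)$. For $j\geq 2$, the vector field $\eta_j^L$ sends $X_i$ into $\dpar{}{X_{i+j-1}}$, so $M_j$ has nonzero entries only in the $(j-1)$-th subdiagonal; in particular $M_j$ is strictly lower triangular for each $j\geq 2$. Summing, the matrix of $j^1\eta$ is lower triangular with diagonal equal to $b_1(0)\cdot(n+1,n,n-1,\ldots,2)$.

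Since the eigenvalues of a (lower) triangular matrix are precisely its diagonal entries, the eigenvalues of $j^1\eta$ are exactly the $n$-tuple $b_1(0)\cdot(n+1,n,\ldots,2)$, which is what the statement asserts. The argument is essentially a book-keeping of which subdiagonal each $M_j$ occupies, so the only point that could be subtle is verifying that the pattern encoded in \cref{eq_ak_lift_1jet} really does give purely subdiagonal matrices for $j\geq 2$; this is a direct inspection of the explicit formulas and I do not expect any hidden obstacle.
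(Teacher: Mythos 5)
Your argument is correct and is essentially the paper's own proof: the paper likewise observes that $j^1\eta$ is the lower-triangular matrix $\sum_j b_j(0)\,j^1\eta_j^L$ with diagonal $b_1(0)\cdot(n+1,n,\ldots,2)$, and reads off the eigenvalues from the diagonal. Your write-up merely spells out the bookkeeping (that $j^1(b_j\eta_j)=b_j(0)\eta_j^L$ and that $\eta_j^L$ occupies the $(j-1)$-th subdiagonal for $j\geq 2$) which the paper leaves implicit.
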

\begin{proof}
This comes directly since the $1$-jet of $\eta$ is given by the lower-triangular matrix
\begin{equation}
\left(\begin{matrix}
b_1(0)(n+1) & 0 & \cdots & 0\\
b_2(0)(n+1) & b_1(0)n & \cdots & 0 \\
\vdots &\vdots &\ddots & \vdots\\
b_n(0)(n+1) & b_{n-1}(0)n & \cdots & b_1(0)2
\end{matrix}\right)
\end{equation}
\end{proof}

\begin{lemma}\label{lemma_vp_low}
If $\eta\in\Lift(F)$ is of the form $\eta = \sum_{j=1}^n b_j(X)\eta_j(X)$, then it admits an $F$-related lowerable vector field $\xi\in\Low(F)$ with eigenvalues given by the vector $b_1(0)\cdot(1,n,n-1,\ldots,2)$.
\end{lemma}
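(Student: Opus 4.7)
The plan is to solve the $F$-relation $\eta \circ F = dF(\xi)$ explicitly for $\xi$ and to read the eigenvalues of its $1$-jet off a block lower-triangular expression for $D\xi(0)$. First I would exploit the trivial structure of the last $n-1$ components of $F$: since $F_l(x) = x_l$ for $l \ge 2$, the relation gives $\xi^l = \eta^l \circ F$ directly. Writing $\eta = \sum_{k=1}^n b_k \eta_k$ and noting that $F_1 = x_1 x_2 + O(|x|^3)$ has vanishing linear part while $F_j = x_j$ for $j \ge 2$, substitution of the $1$-jets from \eqref{eq_ak_lift_1jet} yields
$$j^1 \xi^l(x) = \sum_{k=1}^{l-1} b_k(0)(n+k+1-l)\, x_{l-k+1}, \qquad l \ge 2,$$
the $k=l$ summand disappearing because it involves $X_1 \mapsto F_1$, which is quadratic. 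In particular $(D\xi)_{l,1}(0) = 0$ for every $l \ge 2$ and the $(n-1)\times(n-1)$ submatrix of $D\xi(0)$ on rows and columns $2,\dots,n$ is lower triangular with diagonal $b_1(0)\cdot n,\, b_1(0)(n-1),\, \dots,\, b_1(0)\cdot 2$.

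For $\xi^1$ I would then use the first component of $dF(\xi) = \eta \circ F$, which reads
$$\partial_{x_1} F_1 \cdot \xi^1 = \eta^1 \circ F - \sum_{j=2}^n x_1^{j-1}\, \xi^j,$$
and admits a smooth solution because $\eta$ is liftable. Since $\partial_{x_1} F_1 = x_2 + O(|x|^2)$ is not a unit, the $1$-jet of $\xi^1$ must be extracted from the quadratic expansion of the right-hand side. The key structural observation is that the degree-$2$ part of any $(\eta_k)^1$, after substitution of $F$, yields only monomials in $x_2,\dots,x_n$ (because $F_1$ is itself quadratic), so the coefficient of $x_1 x_2$ in $\eta^1 \circ F$ comes solely from the linear part $b_1(0)(n+1) X_1$ of $\eta^1$ and equals $b_1(0)(n+1)$. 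On the other side, $x_1 \xi^2 = b_1(0) n\, x_1 x_2 + O(|x|^3)$ while $x_1^{j-1} \xi^j$ for $j \ge 3$ is $O(|x|^3)$. Matching the $x_1 x_2$ coefficients with $(x_2 + O(|x|^2)) \cdot \xi^1$ forces $\xi^1(0) = 0$ and $(\partial_{x_1} \xi^1)(0) = b_1(0)$.

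Putting the two steps together, $D\xi(0)$ is of the block form $\left(\begin{smallmatrix} b_1(0) & * \\ 0 & B \end{smallmatrix}\right)$ where $B$ is the $(n-1)\times(n-1)$ lower-triangular block identified above. Its eigenvalues are therefore $b_1(0),\, b_1(0) n,\, b_1(0)(n-1),\, \dots,\, b_1(0)\cdot 2$, which is exactly the vector $b_1(0) \cdot (1, n, n-1, \dots, 2)$ claimed in the statement.

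The main technical hurdle is extracting $(\partial_{x_1} \xi^1)(0)$: because $\partial_{x_1} F_1$ vanishes at the origin, one must work at order two rather than order one and perform a formal division by $x_2 + O(|x|^2)$. What makes the computation tractable is precisely that only the linear part of $\eta^1$ can contribute a monomial involving $x_1$ after the substitution $X \mapsto F$, so the relevant coefficient depends only on the $1$-jets listed in \eqref{eq_ak_lift_1jet} and not on the (unspecified) higher-order terms of the basis vector fields $\eta_1,\dots,\eta_n$.
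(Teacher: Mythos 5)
Your proposal is correct and follows essentially the same route as the paper: both exploit that the last $n-1$ components of $F$ are coordinates to get $\xi^l=\eta^l\circ F$ for $l\geq 2$, and then pin down the remaining first-column data of $D\xi(0)$ by a coefficient extraction in the first component of the relation, reading the eigenvalues off the resulting block-triangular matrix. The only (harmless) difference is in that last step: the paper computes the linear parts of lowerable partners $\xi_i$ of the basis fields, using the explicit Euler lowerable field for $\eta_1$ and the coefficient of $x_1^{n+1}$ to show $c_{i,1}=0$ for $i\geq 2$, whereas you obtain $\partial_{x_1}\xi^1(0)=b_1(0)$ directly from the $x_1x_2$ coefficient (with $\xi^1(0)=0$ coming from the linear, $x_2$, coefficient rather than the quadratic one — a negligible imprecision in your write-up).
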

\begin{proof}
First, we need to study the linear parts of the lowerable vector fields associated to each $\eta_1,\ldots,\eta_n$.
Let $\xi_i\in\Low(F)$ be $F$-related to $\eta_i$ for each $i=1,\ldots,n$.
Since $\eta_1$ is the Euler vector field, we can consider:
$$\xi_1 = x_1\dpar{}{x_1}+ nx_2\dpar{}{x_2} + \cdots + 2x_n\dpar{}{x_n}$$
Fix any $2\leq i \leq n$, we can write $\xi_i = \xi_i^L + \xi_i^{\geq 2}$ where $\xi_i^L$ is the $1$-jet of $\xi_i$ and $\xi_i^{\geq 2}$ has only terms of degree 2 or higher.
Then
%
\begin{equation*}
tF(\xi_i) = \left( \begin{matrix}
\left((n+1)x_1^n+x_2+\cdots +(n-1)x_nx_1^{n-2}\right)(\xi_i)_1 + x_1(\xi_i)_2 + \cdots + x_1^{n-1}(\xi_i)_n\\
(\xi_i^L)_2 + (\xi_i^{\geq 2})_2\\
\vdots \\
(\xi_i^L)_n + (\xi_i^{\geq 2})_n
\end{matrix}\right)
\end{equation*}
Here $(\cdot)_j$ denotes the $j$-component of the corresponding vector field.
We also have
\begin{equation*}
\eta_i\circ F = \left(\begin{matrix}
(\eta_i^{\geq 2})_1\circ F\\
\vdots \\
(\eta_i^{\geq 2})_{i-1} \circ F \\
(n+1)(x_1^{n+1} +x_2x_1+\cdots + x_nx_1^{n-1}) + (\eta_i^{\geq 2})_i\circ F\\
nx_2 + (\eta_i^{\geq 2})_{i+1}\circ F \\
\vdots\\
(i+1)x_{n+1-i} + (\eta_i^{\geq 2})_n\circ F
\end{matrix}\right)
\end{equation*}
Since $tF(\xi_i) = \eta_i\circ F$, we obtain
\begin{align*} 
(\xi_i^L)_{j} &= \left\lbrace\begin{matrix*}[c]
0 & \text{ if } 1 < j \leq i \\
(n+i+1-j)x_{j-i+1} & \text{ if } i < j \leq n
\end{matrix*}\right. \\
(\xi_i^{\geq 2})_{j} &= \left\lbrace\begin{matrix*}[c]
(\eta_i^{\geq 2}\circ F)_j & \text{ if } j\neq i\\
(n+1)(x_1^{n+1}+x_2x_1+\cdots+x_nx_1^{n-1}) + (\eta_i^{\geq 2})_i\circ F & \text{ if }j=i
\end{matrix*}\right.
\end{align*} 

Now we only need to compute $\xi_1^L$, which we can express as $\xi_1^L = \sum_{j=1}^n c_{i,j}x_j$ for some $c_{i,1},\ldots,c_{i,n}\in\bbc$.
In fact, for our purposes, we only need to compute $c_{i,1}$.
Looking now at the first component of equation $tF(\xi_i) = \eta_i\circ F$ we obtain the following:
\begin{multline*}
\left((n+1)x_1^n+x_2+\cdots +(n-1)x_nx_1^{n-2}\right)(\sum_{j=1}^n c_{i,j}x_j + (\xi_i^{\geq 2})_1) \\+ x_1(\eta_i^{\geq 2})_2\circ F + \cdots + x_1^{i-2}(\eta_i^{\geq 2})_{i-1}\circ F 
 \\+ x_1^{i-1}\left((n+1)(x_1^{n+1} + x_2x_1 + \cdots +x_nx_1^{n-1})+(\eta_i^{\geq 2})_i\circ F\right) \\
 + x_1^i(nx_2+(\eta_i^{\geq 2})_{i+1}\circ F) + \cdots + x_1^{n-1}((i+1)x_{n+1-i} + (\eta_i^{\geq 2})_n \circ F) \\
 = (\eta_i^{\geq 2})_1\circ F 
\end{multline*}
Searching for the coefficient of $x_1^{n+1}$ in this equation, we obtain that $c_{i,1} = 0$.
To summarize, the linear parts of each $\xi_i$ can be expressed as
\begin{align*}
\xi_1^L & = x_1\dpar{}{x_1} + nx_2\dpar{}{x_2} + \cdots + 2x_n\dpar{}{x_n} \\
\xi_2^L & = (\sum_{j=2}^n c_{2,j}x_j)\dpar{}{x_1} + nx_2\dpar{}{x_3} + \cdots + 3x_{n-1}\dpar{}{x_n}\\
& \; \; \vdots\\
\xi_{n-1}^L & = (\sum_{j=2}^n c_{n-1,j}x_j)\dpar{}{x_1}  + nx_2\dpar{}{x_n}\\
\xi_n^L & = (\sum_{j=2}^n c_{n,j}x_j)\dpar{}{x_1} 
\end{align*}
for some $c_{i,j}\in\bbc$.
Now, recall that we had $\eta = \sum_{j=1}^n b_j\eta_j$, hence it is $F$-related to the lowerable vector field $\xi(x) = \sum_{j=1}^n b_j(F(x))\xi_j(x)$.
Finally, we can compute the $1$-jet of $\xi$, which is given by the following matrix:
\begin{equation*}
\left(\begin{matrix}
b_1(0) & \sum_{j=2}^nb_j(0)c_{j,2} &  \sum_{j=2}^nb_j(0)c_{j,3} & \cdots & \sum_{j=2}^nb_j(0)c_{j,n-1} &  \sum_{j=2}^nb_j(0)c_{j,n}\\
0 & b_1(0)n & 0 & \cdots & 0 & 0 \\
0 & b_2(0)n & b_1(0)(n-1) & \cdots & 0 & 0\\
\vdots & \vdots & \vdots & \ddots & \vdots & \vdots\\
0 & b_{n-2}(0)n & b_{n-3}(0)(n-1) & \cdots & b_1(0)3 & 0\\
0 & b_{n-1}(0)n & b_{n-2}(0)(n-1) & \cdots & b_2(0)3 & b_1(0)2
\end{matrix}\right)
\end{equation*}
The result follows now by realizing that the eigenvalues of this matrix are those of the entries in the diagonal.
\end{proof}

We can finally prove:


\begin{theorem}\label{thm_minimal}
Let $f\colon(\bbc^{n},0)\to(\bbc^{n},0)$ be a corank 1, $\A$-finite map-germ with minimal stable unfolding. 
Then, the following statements are equivalent:
\begin{enumerate}
\item $f$ is quasi-homogeneous.\label{item_minimal_f_wh}
\item $f$ admits a substantial stable unfolding.\label{item_minimal_exists_subs}
\item Every unfolding of $f$ in the form of \cref{eq_good_unfolding} is substantial.\label{item_minimal_good_form_subst}
\item Every stable unfolding of $f$ is weakly substantial.\label{item_minimal_all_weak}
\end{enumerate}
\end{theorem}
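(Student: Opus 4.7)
My plan is to establish the cyclic chain $(1) \Rightarrow (3) \Rightarrow (4) \Rightarrow (2) \Rightarrow (1)$, together with the immediate observation that $(3) \Rightarrow (2)$ because \cref{prop_normal_unfolding} guarantees that unfoldings in the relevant form exist. The first three implications follow fairly quickly from material already developed; the essential content lies in $(2) \Rightarrow (1)$, where the eigenvalue analysis of liftable vector fields of the stable $A_N$-model combines with the Poincar\'e--Dulac normal form theorem to produce the desired analytic coordinate change witnessing weighted-homogeneity.

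For $(1) \Rightarrow (3)$: by \cref{rem_good_weights_conditions}, under our hypotheses ($n = p$, corank $1$) any quasi-homogeneous $f$ automatically has good weights, so \cref{thm_wh_imp_subs} applies directly and every unfolding in the form of \cref{eq_good_unfolding} is substantial. For $(3) \Rightarrow (4)$: every stable unfolding of $f$ is $\lambda$-equivalent to one in the form of \cref{eq_good_unfolding} by \cref{prop_normal_unfolding}, which by $(3)$ is substantial, hence weakly substantial by \cref{rem_substantial_nonzero_ev}; weak substantiality is then transferred back via \cref{prop_weak_subs_preserved}. For $(4) \Rightarrow (2)$: let $F$ be the minimal stable unfolding. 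By $(4)$ there exists a projectable $\eta \in \Lift(F)$ whose projection has only nonzero eigenvalues. As noted in the introduction to this subsection, the minimality together with the corank $1$ equidimensional hypothesis forces $F$ to be $\A$-equivalent to the standard stable $A_N$-unfolding with $N = n + m$, and \cref{lem_nishimura_lifts} transports $\eta$ to the model. There, \cref{lemma_vp_lift} shows that the linear eigenvalues of any liftable vector field are all proportional to a single scalar $b_1(0)$, so the weak substantiality hypothesis forces $b_1(0) \neq 0$. A suitable linear combination of the generators of $\Lift(F)$ can then be arranged so that its components in the parameter directions take the diagonal form of \cref{def_substantial}, yielding a substantial unfolding.

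The heart of the theorem is $(2) \Rightarrow (1)$. Let $F$ be a substantial minimal stable unfolding with substantial vector field $\eta \in \Lift(F)$ and $F$-related lowerable $\xi \in \Low(F)$. Using the $A_N$-model identification together with \cref{lemma_vp_lift,lemma_vp_low}, after rescaling we may assume the eigenvalues of $\eta$ and $\xi$ are the positive integer tuples $(N+1, N, \ldots, 2)$ and $(1, N, N-1, \ldots, 2)$ respectively. Projecting via \cref{prop_lifts_unfolding} yields $f$-related vector fields $\tilde\eta \in \Lift(f)$ and $\tilde\xi \in \Low(f)$ whose eigenvalues, by \cref{rem_eigenvalues_projection_subset}, are subsets of these positive integers; the substantiality hypothesis ensures that none of these eigenvalues is lost under projection in a way that would block the argument. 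Because positive integers lie in the Poincar\'e domain, \cref{thm_poincare_dulac} yields \emph{analytic} diffeomorphisms in source and target bringing $\tilde\eta$ and $\tilde\xi$ into normal forms $\tilde\eta_S + \tilde\eta_N$ and $\tilde\xi_S + \tilde\xi_N$, with $\tilde\eta_S$ and $\tilde\xi_S$ diagonal Euler vector fields of positive integer weights.

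The main obstacle is then to reconcile these two independent normal forms so that $f$ itself becomes weighted-homogeneous in the combined analytic coordinates. I plan to use parts $(2)$ and $(3)$ of \cref{thm_poincare_dulac}, applied componentwise to the $f$-relation $\tilde\eta \circ f = df(\tilde\xi)$ and to the ideal generated by the components of $f$: this should force each $f_j$ in the new coordinates to be an eigenfunction of $\tilde\eta_S$ with the prescribed positive integer weight, while ensuring that the nilpotent parts $\tilde\eta_N$ and $\tilde\xi_N$ annihilate $f$ componentwise. Once this compatibility is established, $\tilde\eta_S$ and $\tilde\xi_S$ are $f$-related Euler vector fields with positive integer weights, which by \cref{def_weighted_homogeneous} is precisely the weighted-homogeneity of $f$ in the new analytic coordinates, closing the cycle.
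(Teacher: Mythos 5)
The outer structure of your cycle and the implications $(1)\Rightarrow(3)\Rightarrow(4)$ and $(3)\Rightarrow(2)$ are fine and essentially match the paper (which deduces $(1)\Rightarrow$ everything from \cref{thm_wh_imp_subs} and \cref{coro_every_unf_weakly_subs}, and then proves $(2)\Rightarrow(1)$ and $(4)\Rightarrow(1)$ simultaneously). The problem is the heart of the matter, $(2)\Rightarrow(1)$: what you offer there is a plan, not a proof, and the plan as stated does not go through. You normalize $\tilde\eta$ and $\tilde\xi$ by two \emph{independent} Poincar\'e--Dulac changes and then hope that parts (2) and (3) of \cref{thm_poincare_dulac} ``applied componentwise to the $f$-relation and to the ideal generated by the components of $f$'' reconcile them. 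But part (2) applies to an eigenfunction equation $\eta(h)=\beta h$ for a \emph{single} vector field, and before the target field is diagonalized the relation reads $\tilde\xi(f_j)=\tilde\eta_j\circ f$, which is not of that form; and the ideal $\langle f_1,\ldots,f_n\rangle\subset\ofu_n$ is acted on only by the source field, so part (3) at best gives invariance of that ideal under $\tilde\xi_S,\tilde\xi_N$, which neither pins the degrees $d_j$ nor says anything about the target normal form. After two unrelated coordinate changes there is simply no reason for $\tilde\eta_S$ and $\tilde\xi_S$ to be $f$-related, and your sentence ``the substantiality hypothesis ensures that none of these eigenvalues is lost\ldots'' does not supply the missing mechanism.

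The paper avoids this by proceeding sequentially, and the key ingredient you never invoke is $\Lift(f)=\Derlog(\Delta f)$ (\cref{prop_lift_eq_derlog}, available precisely because $n=p$ and $f$ is $\A$-finite). One first applies Poincar\'e--Dulac only in the target to $\eta_0$; since $\eta_0$ preserves the ideal of a reduced equation $h$ of $\Delta f$, part (2)/(3) of \cref{thm_poincare_dulac} shows the semisimple part $\eta_S$ still preserves the discriminant ideal in the new coordinates, hence $\eta_S\in\Lift(f)$ there. Only then does one choose a \emph{new} lowerable field $\xi$ that is $f$-related to the now-diagonal $\eta_S$; its eigenvalues are shown to be positive integers by lifting the pair $(\xi,\eta_S)$ back to $F$ via \cref{prop_lifts_unfolding} and using \cref{lemma_vp_lift,lemma_vp_low}. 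At that point $\xi(f_j)=d_jf_j$ is a genuine eigenfunction equation, so the source-side Poincar\'e--Dulac change together with part (2) gives $\xi_S(f_j)=d_jf_j$ (and $\xi_N(f_j)=0$), i.e.\ $df(\xi_S)=\eta_S\circ f$, which is weighted-homogeneity. Without the Derlog step your argument has no way to certify that the diagonalized target field is still liftable, so the reconciliation you flag as ``the main obstacle'' remains unresolved. (As a smaller point, your direct $(4)\Rightarrow(2)$ sketch is also delicate: the $\A$-equivalence with the $A_N$-model need not respect which coordinates are parameters, and substantiality in \cref{def_substantial} is a condition on the components $d\Lambda_k(\eta)$ in the unfolding coordinates, not just on eigenvalues; the paper sidesteps this by proving $(4)\Rightarrow(1)$ directly with the same argument as $(2)\Rightarrow(1)$.)
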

\begin{proof}
The fact that \ref{item_minimal_f_wh} implies the rest of items is by \cref{thm_wh_imp_subs} and \cref{coro_every_unf_weakly_subs}.
That \ref{item_minimal_good_form_subst} implies \ref{item_minimal_exists_subs} is direct.
We will prove that \ref{item_minimal_exists_subs} implies \ref{item_minimal_f_wh} and \ref{item_minimal_all_weak} implies \ref{item_minimal_f_wh} at the same time, since the arguments are analogous.

Let $F\colon(\bbc^n\times\bbc^m,0)\to(\bbc^n\times\bbc^m,0)$ be a minimal stable substantial or weakly substantial unfolding. Then it is $\A$-equivalent to the stable unfolding with minimal number of parameters of the $A_{n+m}$-singularity.

Let $\eta$ be a substantial (respectively, weakly substantial) vector field, then if $\pi\colon(\bbc^n\times\bbc^m,0)\to(\bbc^m,0)$ is the natural projection, the $1$-jet $d\pi(\eta)$ can be seen as an $m\times m$ matrix whose eigenvalues are all non-zero (recall \cref{def_weakly_subs} and \cref{rem_substantial_nonzero_ev}).

In particular, $\eta$ has some non-zero eigenvalue.
By \cref{lemma_vp_lift} the vector of all eigenvalues of $\eta$ must be a multiple of $(n+1,n,\ldots,2)$ by a single constant, and since one eigenvalue is non-zero, this constant must also be non-zero.
Therefore, after dividing by this constant we can assume that $\eta$ has all positive integer eigenvalues.


Define $\eta_0(X) = \pi_1(\eta)(X,0)$ with $\pi_1$ the projection over the first $n$ components and let $d_1,\ldots,d_n > 0$ be the integer eigenvalues of $\eta_0$ (recall \cref{rem_eigenvalues_projection_subset}) .
Then, by \cref{prop_lifts_unfolding} we have that $\eta_0\in\Lift(f)$.

Let now $H\colon(\bbc^{n}\times\bbc^m,0)\to(\bbc, 0)$ be a reduced equation of $\Delta F$.
In particular, $h(X) = H(X,0)$ is a reduced equation of $\Delta f$.
Since we are in the equidimensional case, by \cref{prop_lift_eq_derlog} we have $\Lift(F) = \Derlog(\Delta F)$, and $\Lift(f) = \Derlog(\Delta f)$.

This means $\eta_0(H) = a\cdot H$ for some $a\in\ofu_{n}$.
By \cref{thm_poincare_dulac}, there is some diffeomorphism $\psi\colon(\bbc^n,0)\to(\bbc^n,0)$ such that $d \psi \circ \eta_0\circ\psi^{-1} = \eta_S + \eta_N$, with $\eta_S = \sum_{j=1}^n d_jX_j\dpar{}{X_j}\in\theta_n$ and some $\eta_N\in\theta_n$ with nilpotent $1$-jet.
The same theorem ensures that $\eta_S(h\circ \psi^{-1}) = (a\cdot h)\circ\psi^{-1})$.
Now, since $h\circ\psi^{-1}$ is a reduced equation of the discriminant of $\psi\circ f$, in particular this means $\eta_S\in\Lift(\psi\circ f)$.

We will proceed working in the coordinates induced by $\psi$ to simplify notation, so that $\eta_S \in\Lift(f)$

Let $\xi\in\theta_n$ be an $f$-related vector field to $\eta_S$.
On one hand, the eigenvalues of $\xi$ must be all positive: this is due to the fact that we can lift $\xi,\eta_S$ to a pair of $F$-related projectable vector fields $\tilde \xi$ and $\tilde \eta_S$ by \cref{prop_lifts_unfolding}.
Since $\tilde{\eta_S}$ is projectable, it has $d_1,\ldots,d_n$ among its eigenvalues, so by \cref{lemma_vp_lift} and \cref{lemma_vp_low} all of the eigenvalues of $\xi$ are positive integers, call them $w_1,\ldots,w_n$.

On the other hand, being $f$-related means that $df(\xi) = \eta_S\circ f$, in particular for each $j=1,\ldots,n$
$$ \xi(f_j) = \sum_{i=1}^n \xi_i \dpar{f_j}{x_i} =  \eta_{S,j}\circ f = d_j\cdot f_j $$
where $\eta_{S,j}(X) = d_jX_j$ is the $j$-component of $\eta_S$.

Applying now \cref{thm_poincare_dulac}, there exists some $\phi\colon(\bbc^n,0)\to(\bbc^n,0)$ diffeomorphism such that $d\phi\circ \xi\circ \phi^{-1} = \xi_S + \xi_N$ with $\xi_S = \sum_{i=1}^n w_ix_i\dpar{}{x_i}\in\theta_n$ and $\xi_N\in\theta_N$ with nilpotent $1$-jet such that, in the new coordinates induced by $\phi$ we have
$$\xi_S(f_j) = d_j \cdot f_j = \eta_{S,j}\circ f$$  
that is, $df(\xi_S) = \eta_S\circ f$, which implies that $f$ is weighted-homogeneous in the coordinates induced by $\psi,\phi$.
\end{proof}

\subsection{Multiplicity 3 case} \label{section_mult3}

Consider the minimal stable unfolding of the $A_2$ singularity:
$$G(y,\lambda) = (y^3+\lambda y,\lambda).$$
An easy computation shows
\begin{equation}\label{eq_gens_a3}
\Lift(G) = \linsp{\ofu_2}{
3Y\dpar{}{Y} + 2\Lambda\dpar{}{\Lambda},-\frac{2}{3}\Lambda^2\dpar{}{Y} + 3Y\dpar{}{\Lambda}
}.
\end{equation}



Let $f\colon(\bbk^n,0)\to(\bbk^n,0)$ be a corank 1, $\A$-finite, multiplicity 3 map-germ.
Then we can assume $f$ is ($\A$-equivalent to) a map-germ of the form
$$f(x,y) = (x,y^3+ q(x)y)$$
where $x\in\bbk^{n-1}$, $y\in\bbk$ and $q\in\ofu_{n-1}$.
We can assume moreover that $q\in \mfr_{n}^2$ and that it is singular, or else we are dealing with either a regular map or a stable map and in both cases it is already equivalent to a weighted-homogeneous map-germ.

In this case, $f$ always admits the stable 1-parameter unfolding
\begin{equation}\label{eq_mult_3_opsu}
F(x,y,\lambda) = (x,y^3 +(q(x)+\lambda)y,\lambda)
\end{equation}

\begin{theorem}\label{thm_mult3}
Let $f\colon(\bbk^n,0)\to(\bbk^n,0)$ be a corank 1, $\A$-finite, non-stable, multiplicity 3 map-germ, the following are equivalent:
\begin{enumerate}
\item $f$ is quasi-homogeneous. \label{item_mult3_qh}
\item The unfolding in \cref{eq_mult_3_opsu} is substantial. \label{item_mult3_one_subs}
\item Every 1-parameter stable unfolding of $f$ is substantial. \label{item_mult3_every_subs}
\end{enumerate}

\end{theorem}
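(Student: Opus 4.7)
The plan is to establish the cycle (\ref{item_mult3_qh}) $\Rightarrow$ (\ref{item_mult3_every_subs}) $\Rightarrow$ (\ref{item_mult3_one_subs}) $\Rightarrow$ (\ref{item_mult3_qh}). The first arrow follows at once from \cref{coro_every_opsu_subs}, since in the corank $1$ equidimensional setting \cref{rem_good_weights_conditions} guarantees that any quasi-homogeneous $f$ has good weights. The second arrow is tautological. Hence the substantive content is in (\ref{item_mult3_one_subs}) $\Rightarrow$ (\ref{item_mult3_qh}), and my strategy is to extract from substantiality of $F$ the condition $q\in J(q):=\langle \partial_{X_1}q,\ldots,\partial_{X_{n-1}}q\rangle$, after which Saito's theorem will finish the job.

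The first concrete step is to describe $\Lift(F)$ explicitly. Parametrizing the critical set $\{3y^2+q(x)+\lambda=0\}$ of $F$ yields that $H(X,Y,\Lambda)=4(\Lambda+q(X))^3+27Y^2$ is a reduced equation for $\Delta F$, so by \cref{prop_lift_eq_derlog} $\Lift(F)=\Derlog(\Delta F)$. After the target coordinate change $M=\Lambda+q(X)$, the equation becomes $4M^3+27Y^2$, independent of $X$, exhibiting $\Delta F$ as a trivial deformation of the standard cusp discriminant. Combining this with \cref{eq_gens_a3}, $\Lift(F)$ is generated as an $\ofu_{n+1}$-module by
\[
\zeta_i = \partial_{X_i} - (\partial_{X_i}q)\,\partial_\Lambda,\quad \eta_1 = 3Y\partial_Y + 2(\Lambda+q(X))\partial_\Lambda,\quad \eta_2 = -\tfrac{2}{3}(\Lambda+q(X))^2\partial_Y + 3Y\partial_\Lambda,
\]
with $1\leq i\leq n-1$.

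Writing a substantial vector field as $\eta=\sum_i A_i\zeta_i + B\eta_1 + C\eta_2$ and computing $d\Lambda(\eta)=2B\Lambda + [2B\,q(X)+3YC-\sum_i A_i\partial_{X_i}q]$, substantiality of the OPSU $F$ is equivalent to $d\Lambda(\eta)=\Lambda\cdot(\text{unit})$, i.e., $B(0,0,0)\neq 0$ together with the identity
\[
2q(X)B(X,Y,0) + 3YC(X,Y,0) = \sum_{i=1}^{n-1} A_i(X,Y,0)\,\partial_{X_i}q(X).
\]
Specializing to $Y=0$ and dividing through by the unit $2B(X,0,0)$ yields $q(X)\in J(q)$. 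Saito's theorem then gives that $q$ is quasi-homogeneous; assigning weight $\deg(q)/2$ to $y$ (after rescaling so that $\deg(q)$ is even) makes $f(x,y)=(x,y^3+q(x)y)$ weighted-homogeneous, proving (\ref{item_mult3_qh}). The main obstacle I anticipate is justifying the application of Saito's theorem, which in its classical form requires $q$ to have an isolated singularity at the origin; I expect this to follow from the $\A$-finiteness of $f$, since the locus where $f$ fails to be of type $A_2$ or milder contains $\mathrm{Sing}(q)\cap\{y=0\}$ and must be isolated in order for $\aecod(f)$ to be finite.
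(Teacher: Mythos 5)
Your argument is, in substance, the paper's own proof: reduce everything to (2) $\Rightarrow$ (1) via \cref{coro_every_opsu_subs} together with \cref{rem_good_weights_conditions}, obtain the generators $\zeta_i,\eta_1,\eta_2$ of $\Lift(F)$, read off the $\dpar{}{\Lambda}$-component of a substantial field, set $\Lambda=Y=0$, divide by the unit $2B(X,0,0)$ to get $q\in Jq$, and finish with Saito's theorem \cite{saito}. (The paper obtains the same three families of generators by pushing $\Lift(\mathrm{Id}_{n-1}\times G)$ through the shear $\Psi(X,Y,\Lambda)=(X,Y,\Lambda+q(X))$ using \cref{lem_nishimura_lifts}, rather than via $\Derlog(\Delta F)$; besides being the same computation, this avoids \cref{prop_lift_eq_derlog}, which is only available for $\bbk=\bbc$, while the statement is over $\bbk$. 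Note also that your reformulation ``substantial $\iff$ $B(0,0,0)\neq 0$ plus the identity at $\Lambda=0$'' silently uses $q\in\mfr_{n-1}^2$, i.e.\ the non-stability hypothesis, which should be made explicit.)

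The one genuine gap is the step you flag yourself: that $q$ has an isolated singularity. As written, your justification does not go through: the locus where $f$ fails to be stable is $\{(x,0): q(x)=dq(x)=0\}$ --- at a point of $\mathrm{Sing}(q)$ with $q(x_0)\neq 0$ the germ of $f$ is a local diffeomorphism, hence stable --- so a priori the instability locus is \emph{contained in}, rather than \emph{contains}, $\mathrm{Sing}(q)\times\{0\}$, and isolated instability does not immediately bound $\mathrm{Sing}(q)$. What rescues the claim over $\bbc$ is the unstated fact that $q$ is constant, hence identically $0$, on every positive-dimensional component of $\mathrm{Sing}(q)$ through the origin, so such a component would indeed consist of unstable points of $f$; alternatively --- and this is what the paper does --- quote Proposition 2.3 of \cite{marartari}: for $n=p$, corank $1$ and multiplicity $3$, $\A$-finiteness of $f$ is equivalent to $\K$-finiteness of $q$, i.e.\ to $q$ having an isolated singularity. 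Once this is supplied, your proof coincides with the paper's.
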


\begin{proof}
By \cref{coro_every_opsu_subs} we only need to prove that \ref{item_mult3_one_subs} implies \ref{item_mult3_qh}.
Consider the germs of diffeomorphism $\Phi,\Psi\colon(\bbk^{n-1}\times\bbk\times\bbk,0)\to(\bbk^{n-1}\times\bbk\times\bbk,0)$ given by
\begin{align*}
\Phi(x,y,\lambda) &= (x,y,\lambda-q(x))\\
\Psi(X,Y,\Lambda) &= (X,Y,\Lambda+q(X))
\end{align*}
then $\Psi\circ F \circ \Phi (x,y,\lambda) = (x, y^3+\lambda y,\lambda) = (\Id_{n-1}\times G)(x,y,\lambda)$.
Now \cref{lem_nishimura_lifts} ensures that:
\begin{align*}
\Lift(\Id_{n-1}\times G) &\to \Lift(F)\\
\eta & \mapsto d\Psi^{-1}(\eta) \circ \Psi
\end{align*}
is an isomorphism.
It is easy to see that $\Psi^{-1}(X,Y,\Lambda) = (X,Y,\Lambda-q(X))$.
Since it is a prism over $G$, we can also compute the generators of $\Lift(\Id_{n-1}\times G)$:
$$\Lift(\Id_{n-1}\times G) = \linsp{\ofu_{n+1}}{\dpar{}{X_1},\ldots,\dpar{}{X_{n-1}},3Y\dpar{}{Y} + 2\Lambda\dpar{}{\Lambda},-\frac{2}{3}\Lambda^2\dpar{}{Y} + 3Y\dpar{}{\Lambda}}$$
Now we can compute the image of these generators via the isomorphism (here we use the fact that $q$ only depends on the first $n-1$ variables and the first $n-1$ components of $\Psi$ are the natural projection):
\begin{align*}
\dpar{}{X_j} & \mapsto \eta^j := \dpar{}{X_j} - \dpar{q}{X_j}(X) \dpar{}{\Lambda} \text{ for } j = 1,\ldots,n-1\\
3Y\dpar{}{Y} + 2\Lambda\dpar{}{\Lambda} & \mapsto \eta^n := 3Y\dpar{}{Y} + 2(\Lambda + q(X))\dpar{}{\Lambda}\\
-\frac{2}{3}\Lambda^2\dpar{}{Y} + 3Y\dpar{}{\Lambda} & \mapsto \eta^{n+1} := -\frac{2}{3}(\Lambda + q(X))^2\dpar{}{Y} + 3Y\dpar{}{\Lambda}\\
\end{align*}


Since $F$ is substantial, this means there exist some $b_1,\ldots,b_{n+1}\in\ofu_{n+1}$ and $\tilde\eta_1,\ldots,\tilde\eta_n\in\ofu_{n+1}$ such that
$$\sum_{j=1}^{n+1} (b_j\cdot\eta^j)(X,Y,\Lambda) = \sum_{j=1}^{n-1}\tilde \eta_j(X,Y,\Lambda) \dpar{}{X_j} +\tilde \eta_n(X,Y,\Lambda) \dpar{}{Y} + \Lambda \dpar{}{\Lambda}.$$
Looking at the coefficient in $\dpar{}{\Lambda}$ we get
$$-\sum_{j=1}^{n-1} b_j\cdot \dpar{q}{X_j}(X) + 2(\Lambda + q(X))b_n + 3Yb_{n+1} = \Lambda. $$
Since $f$ is unstable, $q$ must be a singular function and so $\dpar{q}{X_j}\in\mfr_X$ for all $j=1,\ldots,n-1$.
Therefore $b_n$ must be a unit.
Taking $\Lambda = Y = 0$, it follows that
$$-\sum_{j=1}^{n-1} b_j(X,0,0)\cdot \dpar{q}{X_j}(X) + 2q(X)b_n(X,0,0)  = 0 $$
which means that $q$ belongs to its jacobian ideal.

Now we only need to check that $q$ has isolated singularity, and this will come from the fact that $f$ is $\A$-finite.
If we do this, Saito's theorem in \cite{saito} ensures that $q$ is weighted-homogeneous after some coordinate change in the source.
Let $\tilde \phi\colon(\bbk^{n-1},0)\to(\bbk^{n-1},0)$ be a germ of diffeomorphism such that $\tilde q = q\circ \tilde\phi$ is weighted-homogeneous, let $\phi(x,y) = (\tilde\phi(x),y)$ and $\psi(X,Y) = (\tilde\phi^{-1}(X),Y)$, then
$$\psi\circ f \circ \phi (x,y) = (x,y^3+ \tilde q(x) y)$$
which is weighted-homogeneous.

To check that $q$ must have isolated singularity, we refer to Proposition 2.3 in \cite{marartari}, in which they prove precisely that for an equidimensional map-germ of corank 1 and multiplicity 3, to be $\A$ -finite is equivalent to $q$ being $\K$-finite, which is equivalent to having isolated singularity in the case of functions.
\end{proof}

The last part of the argument is what prevents us from generalizing this approach to other multiplicities.
All corank 1, finite singularity type map-germs in $(\bbk^n,0)\to(\bbk^n,0)$ can be written as
$$f(x,y) = (x, y^{k+1} + q_1(x)y+\cdots + q_{k-1}y^{k-1}).$$
In \cite{marartari} it is proven that  $f$ is $\A$-finite if and only if, under a certain equivalence relation, the tuple $(q_1,\ldots,q_{k-1})$ has finite codimension.
In the case $k=2$, this forces $q$ to be a function with isolated singularity, but for $k>2$ the tuples can have weirder behavior.
This does not seem to make the statement false for higher multiplicities, see \cref{ex_subst_qh} below, but it does mean that some more general argument is needed.

The following example shows that this characterization does not follow for non $\A$-finite map-germs:

\begin{example}\label{ex_subs_no_qh}

Let $f(x,y,z,w) = (x,y,z,w^3 + (xy^3z^3 + y^5+z^5)w)$.
This map-germ is not $\A$-finite.
Its liftable vector fields can be easily computed via SINGULAR and it reveals that it admits no liftable vector field with all positive eigenvalues, therefore it cannot be weighted-homogeneous in any coordinate system.

But the 1-parameter stable unfolding
$$F(x,y,z,w,\lambda) = (x,y,z,w^3 + (\lambda + xy^3z^3 + y^5+z^5)w,\lambda)$$
is substantial, since the vector field $\eta(X,Y,Z,W,\Lambda) = -2X\dpar{}{X}+2Y\dpar{}{Y}+2Z\dpar{}{Z}+15W\dpar{}{W}+10\Lambda\dpar{}{\Lambda} $ is liftable for $F$.
\end{example}

The last example is obtained by augmenting  $w^3$ via the function $g(x,y,z) = xy^3z^3 + y^5+z^5$.
Augmentation is a procedure that allows to construct new singularities as pull-backs of 1-parameter stable unfoldings, while preserving some properties of the map-germs it is obtained from.
For more information on augmentations, we refer to \cite{phiequiv,houstonaug2}.

Here the choice of function seems particularly unfortunate since $g$ does not have isolated singularity.
Moreover, it is not weighted-homogeneous under any coordinate system, since all the Euler vector fields in $\Derlog(g^{-1}(0))$ have mixed positive and negative integer weights.
This function was also used as an example in \cite{bkr_bruce} where they study the relative quasi-homogeneity of functions with respect to given varieties.

Such inconvenient properties of this function make the following example even more interesting:

\begin{example}\label{ex_subst_qh}
Let $f\colon(\bbc^4,0)\to(\bbc^4,0)$ be the map-germ given by $f(x,y,z,w) = (x,y,z,w^4 + xw+(xy^3z^3+y^5+z^5)w^2)$.
This map-germ admits the stable 1-parameter unfolding
$$F(x,y,z,w,\lambda) = (x,y,z,w^4 + xw+(\lambda + xy^3z^3+y^5+z^5)w^2,\lambda)$$
which can be checked that is substantial using SINGULAR.
We omit the specific substantial liftable vector field since the coefficients are too big.
Moreover, we can use this computation and Damon's theorem in \cite{damonakv} to compute that $\aecod(f) = 16$, hence $f$ is $\A$-finite.

In particular, using SINGULAR one can also find the liftable and lowerable vector fields of $F$ and use \cref{prop_lifts_unfolding} to compute those of $f$, finding a pair of $f$-related vector fields with positive integer eigenvalues.
An argument similar to the one in \cref{thm_minimal} shows that there is a coordinate system in which $f$ is weighted-homogeneous.


\end{example}

\section*{Statements and declarations}

\subsection*{Conflict of interests}
The authors declare that there is no conflict of interest.
\subsection*{Data availability} The authors declare that there is no associated data to this manuscript.


\begin{thebibliography}{33}

\bibitem {arnold_wavefronts}{\sc{V. I. Arnold.}}
{\it{Wave front evolution and equivariant Morse lemma}}.
Comm. Pure Appl. Math. 29.6 (1976), pp. 557--582.

\bibitem {bkr_bruce} {\sc{C. Bivià-Ausina, K. Kourliouros, and M. A. S. Ruas.}} {\it{Bruce-Roberts numbers
and quasihomogeneous functions on analytic varieties}}. Res. Math. Sci.
11.3 (2024), Paper No. 46, 23.

\bibitem {augcod1morse} {\sc{I. Breva Ribes and R. Oset Sinha.}} {\it{Simplicity of augmentations of codimension 1 germs and by Morse functions}}. Mediterr. J. Math. 19.279 (2022),
pp. 1--20.

\bibitem {phiequiv} {\sc{I. Breva Ribes and R. Oset Sinha.}} {\it{Augmentation of singularities: $\mu/\tau$ -type
conjectures and simplicity}}. arXiv: 2305.13811. 2023.

\bibitem {notaopsus} {\sc{I. Breva Ribes and R. Oset Sinha.}} {\it{A note on 1-parameter stable unfoldings}}. São Paulo J. Math. Sci. 18.2 (2024), pp. 1251--1259.


\bibitem{bruce_envelopes}{\sc{J. W. Bruce.}} {\it Envelopes and characteristics.}  Math. Proc. Cambridge
Philos. Soc. 100.3 (1986), pp. 475--492.



\bibitem {damonakv} {\sc{J. Damon.}} {\it{$\mathscr A$-equivalence and the equivalence of sections of images and
discriminants}}. Singularity theory and its applications, Part I (Coventry,
1988/1989). Vol. 1462. Lecture Notes in Math. Springer, Berlin, 1991, pp. 93--
121.

\bibitem {damonmond_mudisc} {\sc{J. Damon and D. Mond.}} {\it{$\mathscr A$-codimension and the vanishing topology of discriminants}}. Invent. Math. 106.2 (1991), pp. 217--242.

\bibitem {mendesfavaro1986} {\sc{L. A. Favaro and C. M. Mendes.}} {\it{Global stability for diagrams of differen-
tiable applications}}. Ann. Inst. Fourier (Grenoble) 36.1 (1986), pp. 133--153.

\bibitem {bobadillanunopenafort} {\sc{J. Fernández de Bobadilla, J. J. Nuño-Ballesteros, and G. Peñafort-Sanchis.}} {\it{A Jacobian module for disentanglements and applications to Mond’s con-
jecture}}. Rev. Mat. Complut. 32 (2 2019), pp. 395--418.

\bibitem {greuel_mutau} {\sc{G.-M. Greuel.}} {\it{“Dualit\"at in der lokalen Kohomologie isolierter Singularit\"aten}}. Math. Ann. 250 (1980), pp. 157--173.

\bibitem {greuel_formula} {\sc{G.-M. Greuel and H. A. Hamm.}} {\it{Invarianten quasihomogener vollst\"andiger
Durchschnitte}}. Invent. Math. 49.1 (1978), pp. 67--86.

\bibitem {houstonaug2} {\sc{K. Houston.}} {\it{Augmentations of Singularities of Smooth Mappings}}. Int.
J. Math. 15.2 (2004), pp. 111--124.

\bibitem {houstonkirk} {\sc{K. Houston and N. Kirk.}} {\it{On the Classification and Geometry of Corank 1
Map-Germs from Three-Space to Four-Space}}. Singularity Theory: Pro-
ceedings of the European Singularities Conference, August 1996, Liverpool
and Dedicated to C.T.C. Wall on the Occasion of his 60th Birthday. Ed. by
W. Bruce and D. Mond. London Mathematical Society Lecture Note Series.
Cambridge University Press, 1999, pp. 325--352.

\bibitem {vanstraten} {\sc{T. de Jong and D. van Straten.}} {\it{Disentanglements}}. Singularity theory
and its applications, Part I (Coventry, 1988/1989). Vol. 1462. Lecture Notes
in Math. Springer, Berlin, 1991, pp. 199--211.

\bibitem {looijenga_mutau2} {\sc{E. Looijenga.}} {\it{Milnor number and Tjurina number in the surface case}}. Math. Ann. 271.1 (1985), pp. 121--124.

\bibitem {looijengasteenbrink} {\sc{E. Looijenga and J. Steenbrink.}} {\it{Milnor number and Tjurina number of complete intersections}}. Math. Ann. 271.1 (1985), pp. 121--124.

\bibitem {manciniruas} {\sc{S. Mancini and M. A. S. Ruas.}} {\it{Bifurcations of generic one parameter families
of functions on foliated manifolds}}. Math. Scand. 72.1 (1993), pp. 5--19.

\bibitem {marartari} {\sc{W. Marar and F. Tari.}} {\it{On the geometry of simple germs of co-rank 1 maps
from $\bbr^3$ to $\bbr^3$}}. Math. Proc. Camb. 119 (3 1996), pp. 469--481.

\bibitem {mondconj} {\sc{D. Mond.}} {\it{Vanishing cycles for analytic maps}}. Singularity theory and
its applications, Part I (Coventry, 1988/1989). Vol. 1462. Lecture Notes in
Math. Springer, Berlin, 1991, pp. 221--234.

\bibitem {mondbentwires} {\sc{D. Mond.}} {\it{Looking at bent wires—$\mathscr A_e$ -codimension and the vanishing topology of parametrized curve singularities}}. Math. Proc. Cambridge Philos.
Soc. 117.2 (1995), pp. 213--222.

\bibitem {nunomond} {\sc{D. Mond and J. J. Nuño-Ballesteros.}} {\it{Singularities of Mappings}}. Grundlehren
der mathematischen Wissenschaften. Springer, 2020.

\bibitem {handbookIII} {\sc{D. Mond, J.J. Nuño-Ballesteros}} {\it{Singularities of Mappings}}. Handbook of Geometry and Topology of Singularities III. Cisneros-Molina, J.L., D\~ung Tr\'ng, L., Seade, J. (eds). Springer, Cham. (2022)

\bibitem {nishimuralifts} {\sc{T. Nishimura, R. Oset Sinha, M. Ruas, and R. Wik Atique.}} {\it{Liftable vector
fields over corank one multigerms}}. Math. Ann. 366 (2016), pp. 573--611.

\bibitem {nunoosetlifts} {\sc{J. Nuño-Ballesteros and R. Oset Sinha.}} {\it{Liftable vector fields, unfoldings and
augmentations.}}. Geom. Dedicata 215 (2021), pp. 147--159.

\bibitem {ohmoto_formula} {\sc{T. Ohmoto.}} {\it{Singularities of maps and characteristic classes}}. School on
real and complex singularities in São Carlos, 2012. Vol. 68. Adv. Stud. Pure
Math. Math. Soc. Japan, [Tokyo], 2016, pp. 191--265.

\bibitem {extranicedim} {\sc{R. Oset Sinha, M. A. S. Ruas, and R. Wik Atique.}} {\it{The extra-nice dimensions}}. Math. Ann. 384.3 (2022), pp. 1243--1273.

\bibitem {pallarespenafort_formula} {\sc{I. Pallarés and G. Peñafort Sanchis.}} {\it{Image Milnor number formulas for
weighted-homogeneous map-germs}}. Results Math. 76.3 (2021), Paper No.
152, 23.

\bibitem {saito} {\sc{K. Saito.}} {\it{“Quasihomogene isolierte Singularit\"aten von Hyperfl\"achen”.}}. Invent. Math. 14.2 (1971), pp. 123--142.

\bibitem {vosegaard_mutau} {\sc{H. Vosegaard.}} {\it{A characterization of quasi-homogeneous complete intersection singularities}}. J. Algebraic Geom. 11.3 (2002), pp. 581--597.



\end{thebibliography}
\end{document}